    \newtheorem{theorem}{Theorem}[section]
    \newtheorem{lemma}[theorem]{Lemma}
    \theoremstyle{definition}
    \newtheorem{example}[theorem]{Example}
    \newcounter{smalllist}
\DeclareMathOperator{\Var}{{Var}}
\DeclareMathOperator{\Cov}{{Cov}}
    \numberwithin{equation}{section}
    \newcommand{\lb}{\label}
    \newcommand{\beq}{\begin{equation}}
    \newcommand{\eeq}{\end{equation}}
    \newcommand{\bal}{\begin{align}}
    \newcommand{\eal}{\end{align}}
    \newcommand{\bals}{\begin{align*}}
    \newcommand{\eals}{\end{align*}}
    \newcommand{\bbN}{{\mathbb{N}}}
    \newcommand{\bbR}{{\mathbb{R}}}
    \newcommand{\bbP}{{\mathbb{P}}}
    \newcommand{\bbE}{{\mathbb{E}}}
    \newcommand{\bbZ}{{\mathbb{Z}}}
    \newcommand{\bbQ}{{\mathbb{Q}}}
    \newcommand{\bbS}{{\mathbb{S}}}
    \newcommand{\calF}{{\mathcal F}}
    \newcommand{\calG}{{\mathcal G}}
    \newcommand{\eps}{\varepsilon}
\begin{document}
    \title[Time-dependent Subadditive Theorems]
    {Subadditive Theorems in Time-Dependent Environments}
    
    \author{Yuming Paul Zhang and Andrej Zlato\v s}
    
    \address{\noindent Department of Mathematics \\ University of
    California San Diego \\ La Jolla, CA 92093 \newline Email: \tt
    yzhangpaul@ucsd.edu, zlatos@ucsd.edu}
    
    
    
    \begin{abstract} 
    We prove time-dependent versions of Kingman's subadditive ergodic theorem, which can be used to study stochastic processes as well as propagation of solutions to PDE in time-dependent environments.
    \end{abstract}
    
    \maketitle

    \section{Introduction and Main Results} \lb{S1}
    
During the last half-century, Kingman's subadditive ergodic theorem \cite{kingman} and its versions (in particular, by Liggett \cite{Lig}) have been a crucial tool in the study of evolution processes in stationary ergodic environments, including first passage percolation and related models as well as processes modeled by partial differential equations (PDE) which satisfy the maximum principle.  Typically, the theorem is used to show that propagation of such a process in each spatial  direction has almost surely some deterministic asymptotic speed.  This can also often be extended to existence of a deterministic asymptotic propagation shape when the propagation involves invasion of one state of the process (e.g., the region not yet affected by it) by another (e.g., the already affected region).

Kingman's theorem concerns a family $\{X_{m,n}\}$ ($n>m\ge 0$) of random variables on a probability space which satisfy the crucial subadditivity hypothesis
\beq\lb{1.0}
X_{m,n} \leq X_{m,k}+X_{k,n} \qquad \text{for all $k\in\{m+1,\dots,n-1\}$},
\eeq
together with $\bbE[X_{0,n}]\in[-Cn,\infty)$ for some $C\ge 0$ and each $n\in\bbN$.  Also, $\{X_{m,n}\}$ is stationary in the sense that the joint distribution of $\{X_{m+n,m+n+k}\,|\,(n,k)\in\bbN_0 \times \bbN\}$ is independent of $m\in\bbN_0$.  It  then concludes that $X:=\lim_{n\to \infty}\frac{X_{0,n}}{n}$ exists almost surely, and 
\[
\bbE[X]= \lim_{n\to \infty}\frac{\bbE\left[X_{0,n}\right]}{n}
 =  \inf_{n\ge 1}\frac{\bbE\left[  X_{0,n} \right]}{n}.
\]
Moreover, $X$ is a constant if $\{X_{m,n}\}$ is also ergodic, that is, any event defined in terms of $\{X_{m,n}\}$ and invariant under the shift $(m,n)\mapsto(m+1,n+1)$ has probability either 0 or 1.

A typical use of such a result in the study of PDE is described in Example \ref{E.4.3} below.
We let $X_{m,n}$ be the time it takes for a solution to the PDE to propagate from $me\in\bbR^d$ to $ne\in\bbR^d$ (see the example for details), with $e$ some fixed unit vector (i.e., direction).  Subadditivity is then guaranteed by the maximum principle for the PDE, and Kingman's theorem may therefore often be used to conclude existence of a deterministic propagation speed in direction $e$, in an appropriate sense and under some basic hypotheses.  

However, this approach only works when the coefficients of the PDE are either independent of time or time-periodic.  The present work is therefore  motivated by our desire to apply subadditivity-based techniques to PDE with more general time dependence of coefficients (and to other non-autonomous models), in particular, those with finite temporal ranges of dependence as well as with decreasing temporal correlations.  Despite this being a very natural question, we were not able to find relevant results in the existing literature.  We thus prove here the following two results, and also provide applications to a time-dependent first passage percolation model (see Examples \ref{E.5.1} and \ref{E.5.2} below).  In the companion paper \cite{ZhaZla5} we apply these results to specific PDE models (as described in Example \ref{E.4.3}), specifically  reaction-diffusion equations and Hamilton-Jacobi equations.

Our first main result in the present paper applies when the  process in question (or rather the environment in which it occurs) has a finite temporal range of dependence, with $\calF_t^\pm$ being the sigma-algebras generated by the environment up to and starting from time $t$, respectively.  It mirrors Kingman's theorem, with a weaker stationarity hypothesis (3) below (analogous to \cite{Lig}) but under the additional hypothesis (6).  The latter is the natural requirement that if the process propagates from some ``location'' $m$ to another location $n$, starting at some time $t$, it cannot reach $n$ later than the same process which starts form $m$ at some later time $t+s$, at least when $s$ is sufficiently large.  In the case of PDE, maximum principle will often guarantee this if the time-dependent  propagation times $X_{m,n}^t\ge 0$ (i.e., from location $m$ to $n$, starting at time $t\in[0,\infty)$)  are defined appropriately (see Example \ref{E.4.3}).  We also note that (1) below is the natural version of \eqref{1.0} in the time-dependent setting.
    
\begin{theorem} \lb{T.1.1}   
Let $(\Omega,\bbP,\calF)$ be a probability space, and $\{\calF^{\pm}_t\}_{t\geq 0}$  two filtrations such that 
\[
\calF^{-}_s\subseteq\calF^{-}_{t}\subseteq \calF  \qquad\text{and}\qquad  \calF\supseteq \calF^{+}_{s}\supseteq\calF^{+}_{t}
\]
for all $t\geq s\geq 0$.
For any $t\ge 0$ and integers $n>m\geq 0$, let $X_{m,n}^t:\Omega\to [0,\infty)$ be a random variable.
Let there be $C\ge 0$ such that the following statements hold for all such $t,m,n$.
    \begin{enumerate}[(1)]
        \item $X_{m,n}^t \leq X_{m,k}^t+X_{k,n}^{t+X_{m,k}^t}$ for all $k\in\{m+1,\dots,n-1\}$;
        
        \smallskip
        
        \item $\bbE \left[X_{0,1}^0\right]<\infty$;
        
        \smallskip
        
        \item the joint distribution of $\{X_{m,m+1}^t, X_{m,m+2}^t, \dots \}$ is independent of $(t,m)$;
        
        \smallskip
        
        \item   $X_{m,n}^t$ is $\calF^+_t$-measurable, and 
        $\{\omega\in\Omega\,|\, X_{m,n}^t(\omega)\le s\}\in \calF^{-}_{t+s}$ for any $s\ge 0$; 
        
         \smallskip
        
        \item  $\calF^{-}_t$ and $\calF_{t+C}^+$ are independent; 
        
        \smallskip

        \item $X_{m,n}^t \leq X_{m,n}^{t+s}+s$ for all $s\in [C,C+c]$, with some $c>0$.
    \end{enumerate}
Then 
\beq\lb{4.1}
\lim_{n\to \infty}\frac{X_{0,n}^0 }{n}= \lim_{n\to \infty}\frac{\bbE\left[X_{0,n}^0\right]}{n}
 =  \inf_{n\ge 1}\frac{\bbE\left[  X_{0,n}^0 \right]+C}{n} 
\qquad\text{ almost surely.}
\eeq
Moreover,  if $C\in\bbN$ and $X_{m,n}^t$ are all integer-valued, then it suffices to have $c=0$ in (6).
\end{theorem}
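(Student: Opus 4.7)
The plan is to prove the identities in \eqref{4.1} in two stages: first the convergence of $\bbE[X_{0,n}^0]/n$ together with the infimum formula, and then the almost-sure convergence.  Throughout, let $\gamma$ denote the common value.

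For the expectation identity, the key step is to show that $g(n):=\bbE[X_{0,n}^0]+C$ is subadditive, after which Fekete's lemma gives $g(n)/n\to\inf_n g(n)/n$.  Applying (1) with an arbitrary split point $k$ gives $X_{0,n}^0\le X_{0,k}^0+X_{k,n}^{X_{0,k}^0}$, so the task reduces to $\bbE[X_{k,n}^{X_{0,k}^0}]\le\bbE[X_{0,n-k}^0]+C$.  Partition the range of $X_{0,k}^0$ into disjoint intervals $I_j:=[j\eps,(j+1)\eps)$ with $\eps\in(0,c]$ (or singletons, in the integer-valued case); on $\{X_{0,k}^0\in I_j\}$, the deterministic time $\tau_j:=(j+1)\eps+C$ exceeds $X_{0,k}^0$ by some $s\in[C,C+\eps]\subseteq[C,C+c]$, so (6) yields $X_{k,n}^{X_{0,k}^0}\le X_{k,n}^{\tau_j}+s$.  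Condition (4) places $\{X_{0,k}^0\in I_j\}$ in $\calF^{-}_{\tau_j-C}$ and $X_{k,n}^{\tau_j}$ in $\calF^{+}_{\tau_j}$; (5) makes these independent, and (3) identifies the law of $X_{k,n}^{\tau_j}$ with that of $X_{0,n-k}^0$.  Summing over $j$ and letting $\eps\to 0^+$ produces the bound, with finiteness of $\bbE[X_{0,n}^0]$ ensured inductively from (2).

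For the almost-sure upper bound, fix $N\in\bbN$ and construct a regeneration: set $T_0:=0$ and inductively $Y_j:=X_{(j-1)N,jN}^{T_{j-1}}$ and $T_j:=T_{j-1}+Y_j+C$.  Induction on $j$ combined with (3)--(5) shows that the $Y_j$ are i.i.d.\ with the law of $X_{0,N}^0$: conditional on $T_{j-1}=t$, $Y_j$ is $\calF^{+}_t$-measurable with that law by (3)--(4), while $(Y_1,\ldots,Y_{j-1})\in\calF^{-}_{t-C}$ is independent of $\calF^{+}_t$ by (5).  Iteratively chaining (1) with (6) applied at shift $s=C\in[C,C+c]$ yields the pathwise bound $X_{0,nN}^0\le\sum_{j=1}^n Y_j+nC$, since each step replaces the accumulated random start time $T_{j-1}+Y_j$ by $T_j=T_{j-1}+Y_j+C$ at the cost of $C$.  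The strong law of large numbers then produces $\limsup_n X_{0,nN}^0/(nN)\le (\bbE[X_{0,N}^0]+C)/N$ almost surely; taking the infimum over $N$ gives $\limsup\le\gamma$ along multiples of $N$, while indices $k=nN+r$ with $0<r<N$ are handled by bounding $X_{0,k}^0-X_{0,nN}^0\le X_{nN,k}^{X_{0,nN}^0}$ via iterated (1) and controlling the residual uniformly in $n$ by Borel--Cantelli.

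The matching almost-sure lower bound $\liminf_n X_{0,n}^0/n\ge\gamma$ is the most delicate step.  The pathwise upper bound combined with uniform integrability of the i.i.d.\ partial-sum averages and $\bbE[X_{0,n}^0]/n\to\gamma$ forces $X_{0,n}^0/n\to\gamma$ in $L^1$, and in particular $\bbP(X_{0,n}^0/n<\gamma-\eps)\to 0$ for every $\eps>0$.  To upgrade this to an almost-sure statement, I would argue that the finite range of dependence (5) implies a tail-triviality property for $(X_{0,n}^0)_n$---its asymptotic behavior is determined by the environment on ever-later time intervals, independent of every $\calF^{-}_t$---so that $\liminf_n X_{0,n}^0/n$ is almost surely constant; the established convergence in probability then pins this constant down to $\gamma$.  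Under the strengthened hypotheses of the final sentence of the theorem (integer-valued $X_{m,n}^t$, $C\in\bbN$, and $c=0$), the $\eps$-partition arguments above collapse: (6) with $s=C$ is an exact shift, so singletons $I_j:=\{j\}$ suffice and every step is an exact inequality without a limit in $\eps$.  The principal technical obstacles will be (i) the rigorous justification of the stopping-time and independence structure in the regenerative construction of the second paragraph, namely verifying that $T_j$ is an $\calF^{-}$-stopping time and that the measurability of $Y_j$ and of the history $(Y_1,\ldots,Y_{j-1})$ align properly for (5) to yield the required independence, and (ii) the tail-triviality argument underpinning the almost-sure lower bound.
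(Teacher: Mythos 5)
Your proposal is on the right track for two of the three pieces, but there is a genuine gap in the third.

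The Fekete argument for the expectation limit and the regenerative upper bound both match the paper's strategy in spirit (the paper first reduces to $\bbN$-valued variables via $T_{m,n}^t:=\lceil X_{m,n}^t+C\rceil$ and a $C$-shift of $\calF^-$, then later undoes the ceiling by a scaling $S\to\infty$; your $\eps$-interval partition is a reasonable substitute). One point worth being careful about in the regeneration step: if you bound $X_{0,jN}^0\le T_j-C$ by induction, the gap $T_j-X_{0,jN}^0$ is only $\ge C$, not necessarily in $[C,C+c]$, so you cannot apply (6) directly at that random shift. The clean fix is to show that $\widetilde T_{m,n}^t:=X_{m,n}^t+C$ itself satisfies hypothesis (1), using (6) with $s=C$ once per step, and then iterate (1) for $\widetilde T$ along the chain $T_j$; this yields exactly $X_{0,nN}^0\le\sum_{j\le n}Y_j+(n-1)C$ and makes the i.i.d.\ bookkeeping (your obstacle (i)) track the paper's argument.

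The almost-sure lower bound is where the proposal breaks down, and the issue is not merely technical. First, the ``tail-triviality'' you invoke does not follow from (5) alone: $\liminf_n X_{0,n}^0/n$ is determined by the environment from time $0$ onward, so without additional input it is not obviously independent of any $\calF_t^-$. What the paper actually uses is the monotonicity in (6), $Z_m^{t+Ck}\ge Z_m^t$, together with stationarity (3) to force $Z_m^{t+Ck}=Z_m^t$ a.s.\ for all $k$; only then does (5) make $Z_m^t$ independent of every $\calF_{t+Ck}^-$, and hence constant. Second, and more fundamentally: even granting that $Q:=\liminf_n X_{0,n}^0/n$ is a.s.\ constant, convergence in probability of $X_{0,n}^0/n$ to $\gamma$ does \emph{not} imply $Q=\gamma$. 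A sequence can converge in probability to $\gamma$ while almost surely dipping below $\gamma-\eps$ infinitely often along an $\omega$-dependent subsequence (so that $\liminf=Q<\gamma$ a.s.), as long as the dipping times become rare for each fixed $n$. Your proposal offers no mechanism to rule this out. The paper instead proves $Q\ge\gamma$ directly by a stopping-time argument in the style of Katznelson--Weiss and Levental: one introduces $N_m^t:=\min\{n\ge1:T_{m,m+n}^t\le n(Q+\eps)\}$, truncates at a level $M_\eps$, builds a chain $(r_k,t_k)$ by either jumping $N_{r_k}^{t_k}$ steps (when small) or one step (when large), and compares the expectation of the chained bound against $nQ$. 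This is the genuinely delicate step of the proof and is entirely absent from your proposal; the in-probability deduction cannot substitute for it.
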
    
    
{\it Remarks.}   1.  Of course, it suffices to assume  (1) and (6) only almost surely.
\smallskip

2. There would be little benefit in using different $C$ in (5) and (6) because (5) clearly holds with any larger $C$, while iterating (6) yields (6)  for all $s\in[kC,kC+kc]$ and any $k\in\bbN$.
\smallskip

3.  The ergodicity hypothesis in \cite{kingman} is here replaced by (5) (or by ($5^{*}$) below).
\smallskip

Our second main result  allows for an infinite temporal range of dependence of the environment, provided this dependence decreases with time in an appropriate sense, and we then also need  a uniform bound in place of (2).

\begin{theorem} \lb{T.1.2}
Assume the hypotheses of Theorem \ref{T.1.1}, but with
(2) and (5) replaced by 
\begin{enumerate}
\item[($2^{*}$)]  $X_{0,1}^0\le C$;

\smallskip

\item[($5^{*}$)]  $\lim_{s\to\infty}\phi(s)=0$, where 
\[
\phi(s):=\sup\left\{\left| \bbP[F|E] -\bbP[F]\right| \,\big|\,\, t\ge 0 \,\,\&\,\, (E,F)\in \calF_t^- \times \calF_{t+s}^+ \,\,\&\,\, \bbP[E]>0 \right\}.
\]
\end{enumerate}
Then 
\beq\lb{306}
\lim_{n\to\infty}\frac{X_{0,n}^0}{n} =\lim_{n\to \infty}\frac{\bbE\left[X_{0,n}^{0}\right]}{n}\qquad\text{ in probability},
\eeq
and if there is $\alpha>0$ such that $\lim_{s\to\infty} s^\alpha \phi(s)=0$, then also
\beq\lb{4.1'}
\lim_{n\to \infty}\frac{X_{0,n}^0 }{n}= \lim_{n\to \infty}\frac{\bbE\left[X_{0,n}^0\right]}{n}
\qquad\text{ almost surely.}
\eeq
Moreover,  if $C\in\bbN$ and $X_{m,n}^t$ are all integer-valued, then it suffices to have $c=0$ in (6).
\end{theorem}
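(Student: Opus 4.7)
The plan is to adapt the proof of Theorem \ref{T.1.1} to the mixing setting of $(5^*)$, exploiting the uniform bound $X_{0,n}^0 \le nC$ almost surely that follows from $(2^*)$, (1), and (3), which lets me turn statements about events into bounded-variable covariance estimates. Throughout I rely on iterating (6) into the set of admissible shifts $\bigcup_{k\ge 1}[kC,\,kC+kc]$, which eventually covers an interval $[C',\infty)$ (or all positive integer multiples of $C$ in the integer case with $c=0$).

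First, to show that $\gamma := \lim_{n} \bbE[X_{0,n}^0]/n$ exists, I would establish the approximate subadditivity
\[
\bbE[X_{0,n+m}^0] \le \bbE[X_{0,n}^0] + \bbE[X_{0,m}^0] + s + 2mC\,\phi(s) + \eta(c,m)
\]
valid for all sufficiently large admissible $s$, with $\eta(c,m)\to 0$ as the mesh $c$ of a grid imposed on $X_{0,n}^0$ is refined. The proof is by discretizing the random starting time $X_{0,n}^0$ into level sets of width $\sim c$ (so the shift from each level-set value up to the next grid point falls into the interval of (6)), then shifting the second increment $X_{n,n+m}^{X_{0,n}^0}$ forward by $s$ via (6), and finally applying $(5^*)$ to decouple the resulting shifted variable from the $\calF^-$-measurable event defining the level set. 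Sending first $s\to\infty$ then $c\to 0$ gives exact asymptotic subadditivity, hence the limit $\gamma\in[0,C]$.

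Second, for the convergence in probability \eqref{306}, I iterate (1) and (6) to obtain, for $n=Nk$ and any large admissible $s$,
\[
X_{0,Nk}^0 \le \sum_{j=0}^{N-1} Y_j + Ns, \qquad Y_j := X_{jk,(j+1)k}^{X_{0,jk}^0+s}, \qquad 0\le Y_j\le kC.
\]
The same discretize-and-decouple argument yields $\bbE[Y_j]\le \bbE[X_{0,k}^0]+kC\phi(s)+\eta(c,k)$ and the covariance estimate $|\Cov(Y_i,Y_j)|\le 2\phi(s)(kC)^2$ for $i\ne j$: after discretizing the random start time, the sigma-algebra containing $Y_j$ is separated by a time gap $\ge s$ from the one containing $Y_i$ for $i<j$. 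Chebyshev's inequality then gives
\[
\bbP\left(\frac{X_{0,Nk}^0}{Nk} > \frac{\bbE[X_{0,k}^0]}{k}+\frac{s}{k}+C\phi(s)+\delta\right) \le \frac{C^2}{\delta^2\,N}+\frac{C^2\phi(s)}{\delta^2}.
\]
Choosing first $k$ large (so $\bbE[X_{0,k}^0]/k$ is within $\delta$ of $\gamma$ and $s/k<\delta$), then $s$ large, and finally $N\to\infty$, controls the upper tail. The matching lower tail $\bbP(X_{0,n}^0/n<\gamma-\delta)\to 0$ is forced by the bound $X_{0,n}^0\le nC$ together with $\bbE[X_{0,n}^0]/n\to \gamma$ via a standard averaging argument (a bounded sequence whose mean tends to $\gamma$ and whose upper tail vanishes must also have a vanishing lower tail). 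Subadditivity and $X_{m,m+1}^t\le C$ bridge general $n$ to consecutive multiples of $k$.

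For the a.s.\ convergence \eqref{4.1'} under $s^\alpha\phi(s)\to 0$, I would choose $s=N^\beta$ with $\beta\in(0,1)$ so that $s/k\to 0$ for a slowly growing $k=k(N)$, $N\phi(s)\to 0$, and the Chebyshev bound above becomes summable in $N$ along a geometric subsequence $N_l=2^l$; Borel--Cantelli then gives a.s.\ convergence along this subsequence, and the Lipschitz-type bound $|X_{0,n}^0-X_{0,n'}^0|\le |n-n'|C$ (from (1) and $X_{m,m+1}^t\le C$) extends it to all $n$. The main obstacle I anticipate is the bookkeeping of the discretize-and-decouple step, balancing mesh $c$, shift $s$, and block length $k$ against the mixing rate $\phi(s)$; the framework mirrors that of Theorem \ref{T.1.1}, but since (6) only allows shifts in a narrow interval, careful discretization of every random starting time is essential to make the decoupling argument go through.
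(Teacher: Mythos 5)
Your in-probability argument is essentially sound, and in one respect it is even simpler than the paper's: since $(2^*)$ forces $X_{0,n}^0\le nC$, your bounded-variable averaging argument for the lower tail of \eqref{306} is legitimate and shorter than the paper's variance-based contradiction argument (which the paper uses because its intermediate Theorem \ref{T.2.1} only assumes a finite second moment, not a uniform bound). The discretize-and-decouple scheme and the Chebyshev/covariance estimate mirror what the paper does through $T_{m,n}^t:=X_{m,n}^t+C_\eps$ and Lemma \ref{L.3.0}, though you should be aware that the covariance bound is not just ``separation by a time gap $\ge s$'': the starting time of each block is itself random, so one must condition on it, and this conditioning is exactly what forces the paper to introduce the modified $\phi$ of Remark 2 and Lemma \ref{L.3.0} (a sum over a disjoint family of conditioning events, not a single pair $(E,F)$). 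Your sketch glosses over this, but it is fixable.

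The genuine gap is in the almost-sure statement \eqref{4.1'}, and it lies on the liminf side. Your Chebyshev bound concentrates $\sum_j Y_j$, and since \eqref{4.1'} is obtained through the \emph{one-sided} inequality $X_{0,Nk}^0\le\sum_j Y_j+Ns$, Borel--Cantelli along $N_l=2^l$ gives only $\limsup_l X_{0,N_l k}^0/(N_l k)\le \underline{X}$ a.s., not two-sided convergence along the subsequence. The lower tail cannot be borrowed from the averaging argument, because that argument yields $\bbP[X_{0,n}^0/n<\underline{X}-\delta]\to 0$ without any summable rate (it leaves a residual $\delta'/(\delta+\delta')$ that only goes to zero after $n\to\infty$). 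The subsequent ``Lipschitz-type bound'' you invoke is also only one-sided: (1) and $(2^*)$ give $X_{0,n}^0\le X_{0,n'}^0+(n-n')C$ for $n>n'$, but no lower bound on $X_{0,n}^0-X_{0,n'}^0$, so it cannot fill in a two-sided a.s.\ limit either. The paper handles the liminf with a structurally different argument: using Lemma \ref{L.3.0} it first shows that $Z_0^0:=\liminf_n X_{0,n}^0/n$ is almost everywhere constant (by showing $\Cov[Y_{0;n,j}^0,Y_{0;n,j}^{Ck}]\le C^2\phi(C(k-j))$, contradicting a positive variance), and then runs a Levental/Katznelson--Weiss stopping-time argument adapted to the $t$-dependent setting to show that constant dominates $\underline{X}$. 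That piece of the proof does not appear in your proposal and is not recoverable from the Chebyshev/Borel--Cantelli framework you set up, so you would need to supply it, or an equivalent substitute, to complete \eqref{4.1'}.
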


{\it Remarks.}  
1. Again, using different $C$ in ($2^*$) and (6) would not strengthen the result.
\smallskip

2. We will actually prove this result with $\phi(s)$ being instead the supremum of
\[
\sum_{i\ge 0} \left|\bbP[F_i\cap E_i] - \bbP[F_i]\bbP[E_i] \right| 
\]
over all $\{(E_i,F_i)\in \calF_{t_i}^- \times \calF_{t_i+s}^+\}_{i\ge 0}$ with $t_0,t_1,\dots\ge 0$ and $E_0,E_1,\dots$  pairwise disjoint (which is clearly no more than $\phi(s)$ from ($5^{*}$)).
\smallskip

3. We will also show that without  assuming $\lim_{s\to\infty} s^\alpha \phi(s)=0$, we still have
\beq\lb{4.1''}
\liminf_{n\to \infty}\frac{X_{0,n}^0 }{n}\ge \lim_{n\to \infty}\frac{\bbE\left[X_{0,n}^0\right]}{n}
\qquad\text{ almost surely.}
\eeq
\smallskip

{\bf Organization of the Paper and Acknowledgements.}
We prove Theorem \ref{T.1.1} in Section \ref{S2} and the claims in Theorem \ref{T.1.2} in Sections \ref{S3} and \ref{S3'}.  Section \ref{S4} contains some applications of these results.

We thank Patrick Fitzsimmons and Robin Pemantle for useful discussions.
YPZ acknowledges partial support by an AMS-Simons Travel Grant.  AZ acknowledges partial support by  NSF grant DMS-1900943 and by a Simons Fellowship.

\section{Finite Temporal Range of Dependence} \lb{S2}

Let us first prove a version of Theorem \ref{T.1.1} with $\bbN_0$-valued random variables and $C=0$ in (5).  Theorem \ref{T.1.1} will then easily follow. Let us denote $\{X=s\}:=\{\omega\in\Omega\,|\, X(\omega)=s\}$.

\begin{theorem}    \lb{T.2.5}
Let $(\Omega,\bbP,\calF)$ be a probability space, and $\{\calF^{\pm}_t\}_{t\in\bbN_0}$  two filtrations such that 
\beq \lb{4.5}
\calF^{-}_0\subseteq\calF^{-}_{1}\subseteq\dots\subseteq \calF  \qquad\text{and}\qquad  \calF\supseteq \calF^{+}_{0}\supseteq\calF^{+}_{1}\supseteq \dots.
\eeq
For any integers $t\ge 0$ and $n>m\geq 0$, let $T_{m,n}^t:\Omega\to \bbN_0$ be a random variable.
Let there be $C,C'\in \bbN$ such that the following statements hold for all such $t,m,n$.
    \begin{enumerate}[(1')]
        \item $T_{m,n}^t\leq T_{m,k}^t+T_{k,n}^{t+T_{m,k}^t}$ for all $k\in\{m+1,\dots,n-1\}$;
        
        \smallskip
        
        \item $\bbE \left[T_{0,1}^0\right]\leq C'$;
        
        \smallskip
        
        \item  the joint distribution of $\{T_{m,m+1}^t, T_{m,m+2}^t, \dots\}$  is independent of $(t,m)$;

        \smallskip
        
        \item $T_{m,n}^t$ is $\calF^+_t$-measurable, and 
        $\{T_{m,n}^t=j\}\in \calF^{-}_{t+j}$ for any $j\in\bbN_0$; 
        
         \smallskip
        
        \item  $\calF^{-}_t$ and $\calF_{t}^+$ are independent;
        
        \smallskip
        
         \item $T_{m,n}^t\leq T_{m,n}^{t+C}+C$.
    \end{enumerate}
Then    
\beq\lb{223}
\lim_{n\to \infty}\frac{T_{0,n}^0 }{n}= \lim_{n\to \infty}\frac{\bbE\left[T_{0,n}^0\right]}{n} = \inf_{n\ge 1}\frac{\bbE\left[T_{0,n}^0\right]}{n}
 \qquad\text{ almost surely.}
 \eeq
\end{theorem}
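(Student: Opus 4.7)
The plan is to prove the result in four steps: (i) convergence of expectations to $\gamma := \inf_n \bbE[T_{0,n}^0]/n$ via Fekete; (ii) an almost sure upper bound and $L^1$-convergence via an iid block decomposition; (iii) a zero-one law showing $\liminf_n T_{0,n}^0/n$ and $\limsup_n T_{0,n}^0/n$ are a.s.\ constant; and (iv) identification of both constants as $\gamma$. Write $V_n := T_{0,n}^0/n$ and $\gamma_n := \bbE[V_n]$. For step (i), taking expectations in (1') with $k = n$ gives $\bbE[T_{0,n+m}^0] \le \bbE[T_{0,n}^0] + \bbE[T_{n,n+m}^{T_{0,n}^0}]$. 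To evaluate the second term I would condition on $\{T_{0,n}^0 = j\}$, which by (4') lies in $\calF_j^-$: since $T_{n,n+m}^j$ is $\calF_j^+$-measurable (also by (4')), it is independent of this event by (5'), with distribution equal to that of $T_{0,m}^0$ by (3'). This gives $\bbE[T_{n,n+m}^{T_{0,n}^0}] = \bbE[T_{0,m}^0]$, so $n\gamma_n$ is subadditive and Fekete yields $\gamma_n \to \gamma$.

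For step (ii), define $S_1 := T_{0,k}^0$ and recursively $S_{i+1} := T_{ik,(i+1)k}^{S_1+\cdots+S_i}$. Iterating (1') gives $T_{0,qk}^0 \le \sum_{i=1}^q S_i$, and the same conditioning argument as above shows the $S_i$ are iid with $S_i \stackrel{d}{=} T_{0,k}^0$. The strong law yields $T_{0,qk}^0/(qk) \le \gamma_k + o(1)$ a.s.; extending to general $n$ via Borel-Cantelli on the tail $T_{qk,qk+r}^{\cdot} \stackrel{d}{=} T_{0,r}^0 \in L^1$ and then letting $k \to \infty$ gives $\limsup_n V_n \le \gamma$ a.s. The pointwise iid domination and $L^1$ LLN further yield uniform integrability of $(V_n)_n$, and combined with $\bbE[V_n] \to \gamma$ also $V_n \to \gamma$ in $L^1$ (via $\bbE|V_n-\gamma| \le \bbE[U_n^{(k)}-V_n] + \bbE|U_n^{(k)}-\gamma_k| + (\gamma_k-\gamma) \to 2(\gamma_k-\gamma)$, then $k \to \infty$).

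For step (iii), $V_n$ is measurable in $\sigma(\bigcup_s \calF_s^-)$ by (4'), so $\liminf_n V_n$ and $\limsup_n V_n$ are too. From iterated (6'), $T_{0,n}^0 \le T_{0,n}^{jC} + jC$, so $\liminf_n V_n \le \liminf_n T_{0,n}^{jC}/n$ a.s.; since these two have the same distribution by (3') and one dominates the other, they are equal a.s. Hence $\liminf_n V_n$ is $\calF_{jC}^+$-measurable for every $j$ and thus lies in $\bigcap_j \calF_{jC}^+$, which by (5') (with the independence $\calF_{jC}^- \perp \calF_{jC}^+$) is independent of $\sigma(\bigcup_s \calF_s^-)$. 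So $\liminf_n V_n$ is independent of itself, hence a.s.\ constant; similarly for $\limsup_n V_n$. Reverse Fatou via uniform integrability plus the a.s.\ upper bound then pins down $\limsup_n V_n = \gamma$ a.s.

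The main obstacle is step (iv): identifying the $\liminf$ constant as $\gamma$. $L^1$-convergence combined with constancy alone is not enough (an iid Bernoulli counterexample satisfies all the abstract hypotheses apart from the subadditive coupling). I would exploit the doubling relation $V_{2n} \le (V_n + V_n')/2$ with $V_n, V_n'$ iid (from (1') with $k = n$ and the same conditioning argument): on $\{V_n \le c + \eps\}$ with $c < \gamma$, subadditivity forces $\bbE[V_{(1+\alpha)n} \mid V_n \le c+\eps]$ strictly below $\gamma$, so mass-balance against $\bbE[V_{(1+\alpha)n}] \to \gamma$ gives $\bbP[V_n \le c+\eps] \to 0$. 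Iterating the doubling (which yields $V_{2^k n} \le 2^{-k}\sum_{i=1}^{2^k} V_n^{(i)}$ for iid copies of $V_n$) and bootstrapping this in-probability bound to a summable rate suitable for Borel-Cantelli is expected to rule out $V_n \le c+\eps$ occurring infinitely often on any positive-probability set.
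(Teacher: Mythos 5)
Your steps (i)--(iii) essentially reproduce the paper's argument: Fekete for the equality of limits, iid block decomposition plus strong LLN and Borel--Cantelli for $\limsup_n T_{0,n}^0/n \le \gamma$, and the observation that $Z^t_m := \liminf_n T^t_{m,m+n}/n$ is a.s.\ constant because (6') makes $Z_m^{t+Ck}$ monotone in $k$, (3') forces $Z_m^{t+Ck} = Z_m^t$ a.s., and then (4')+(5') show $Z_m^t$ is independent of itself. These parts are sound.

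The genuine gap is step (iv). The doubling relation $V_{2n} \le \tfrac12(V_n + V_n')$ with $V_n'$ an independent copy only yields \emph{upper} bounds, and in expectation it reproduces exactly Fekete, $\gamma_{2n}\le\gamma_n$, with no additional slack. The ``mass-balance'' step is circular: to conclude $\bbP[V_n \le c+\eps]\to 0$ from $\bbE[V_{(1+\alpha)n}\mid V_n\le c+\eps]<\gamma-\delta'$, you would need an a priori upper bound $\bbE[V_{(1+\alpha)n}\mid V_n>c+\eps]\le\gamma+o(1)$, but precisely when $\bbP[V_n\le c+\eps]$ is bounded away from $0$, conservation of the mean forces $\bbE[V_n\mid V_n>c+\eps]$ to exceed $\gamma$, and the subadditive bound only propagates this upward, so no contradiction arises. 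Moreover, even if one grants $\bbP[V_n\le c+\eps]\to 0$ (which your $L^1$-convergence from step (ii) already gives), convergence in probability is strictly weaker than the a.s.\ statement $\liminf_n V_n\ge\gamma$: the set $\{V_n\le c+\eps\}$ can have vanishing probability for each $n$ yet still cover a.e.\ $\omega$ infinitely often. Subadditivity gives no lower bounds on $T_{0,n}^0$ in terms of sub-blocks, so the ``iterate the doubling and Borel--Cantelli'' bootstrap has nothing to grab onto, and you acknowledge this is only ``expected'' to work.

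The paper closes this via the Katznelson--Weiss/Levental stopping-time argument, which is the missing idea: set $Q_\eps := Q+\eps$ and $N^t_m := \min\{n\ge 1 : T_{m,m+n}^t \le nQ_\eps\}$ (finite a.s.\ since $Z_m^t=Q$), choose $M_\eps$ with $\bbE[T_{0,1}^0\,1_{\{N_0^0>M_\eps\}}]\le\eps$, and recursively build $r_{k+1} = r_k + N_{r_k}^{t_k}1_{\{N_{r_k}^{t_k}\le M_\eps\}} + 1_{\{N_{r_k}^{t_k}>M_\eps\}}$ and $t_{k+1}=t_k+T_{r_k,r_{k+1}}^{t_k}$. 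Each ``good'' block contributes $\le (r_{k+1}-r_k)Q_\eps$; ``bad'' blocks have size $1$ and, after conditioning on $\{r_k=i, t_k=j\}\in\calF_j^-$ and using (5')+(3'), contribute $\le\eps$ each in expectation. Summing, $\bbE[T_{0,n}^0]/n \le Q_\eps + 2\eps + O(1/n)$, which yields $\gamma\le Q$ and completes the proof. The crucial feature is that the block sizes are \emph{random and adapted} to the process, something the fixed-block doubling cannot emulate; this is exactly where the $t$-dependence is absorbed and where the paper's preliminary reduction (showing $Z_m^t$ is a deterministic constant $Q$ \emph{before} defining $N^t_m$) is essential, in contrast to the $t$-independent Levental argument.
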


\begin{proof}


First, we claim that  almost surely we have
 \beq\lb{211}
    \limsup_{n\to \infty}\frac{T_{0,n}^0}{n}\leq \lim_{n\to \infty}\frac{\bbE\left[T_{0,n}^0\right]}{n}=\inf_{n\ge 1}\frac{\bbE\left[T_{0,n}^0\right]}{n}.
\eeq
The proof of \eqref{211} is similar to the proof of \cite[Lemma 6.7]{BIN}, although there the analogs of $T_{m,n}^t$ were bounded random variables; the idea goes back to \cite{kingman}, where the analogs of $T_{m,n}^t$ were $t$-independent.
For any integers $n>m> 0$, (4') shows that for any $i,j\in\bbN_0$ we have
\[
\{T_{0,m}^0 =i\}\in \calF^-_{i}\qquad\text{and}\qquad \{T_{m,n}^{i} =j\}\in \calF^+_{i}.
\]
Therefore (5') and (3') yield
\[
\bbP\left[T_{0,m}^0 =i \,\, \& \,\, T_{m,n}^i =j\right]
=\bbP\left[T_{0,m}^0 =i\right]\bbP\left[ T_{m,n}^i =j\right]
 =\bbP\left[T_{0,m}^0 =i\right]\bbP\left[ T_{0,n-m}^0 =j\right].
\]
Summing this over $i\in\bbN_0$, we find that $T_{m,n}^{T_{0,m}^0}$ ($=T_{m,n}^{T_{0,m}^0(\cdot)}(\cdot)$) has the same distribution as $T_{0,n-m}^0$.
Thus from (1') we obtain
\[
\bbE\left[T_{0,n}^0 \right]\leq \bbE\left[T_{0,m}^0 \right]+\bbE\left[T_{m,n}^{T_{0,m}^0 } \right]\leq \bbE\left[T_{0,m}^0 \right]+\bbE\left[T_{0,n-m}^0 \right].
\]
Fekete's subadditive lemma thus implies that the equality in \eqref{211} holds.

For any $n\in\bbN$, let $t^{n}_0:=0$ and $\xi_0^{n}:=T_{0,n}^0$, and then for $i\in\bbN$ define recursively
\[
 t^{n}_i :=t^{n}_{i-1} +\xi_{i-1}^{n} \qquad\text{and}\qquad  \xi_i^{n} :=T_{in,(i+1)n}^{t^{n}_{i} } .
\]
By iteratively applying (1'), we get for any $k\in\bbN$,
\beq\lb{212}
T_{0,kn}^0 \leq \sum_{i=0}^{k-1} \xi_i^{n}.
\eeq
Similarly as above, it follows from (3')--(5') that for any $j_0,j_1,\dots,j_{k-1}\in \bbN_0$ we have
\begin{align*}
    \bbP \left[\xi^{n}_i =j_i\text{ for  }i=0,\dots, k-1\right] 
&=\bbP\left[\xi^{n}_i =j_i\text{ for  }i=0,\dots, k-2 \,\, \& \,\, T_{(k-1)n,kn}^{\sum_{i=0}^{k-2} j_i} =j_{k-1}\right]\\ 
&=\bbP\left[\xi^{n}_i =j_i\text{ for  }i=0,\dots, k-2\right] \, \bbP\left[T_{(k-1)n,kn}^{\sum_{i=0}^{k-2} j_i} =j_{k-1}\right]\\ 
&=\bbP\left [\xi^{n}_i =j_i\text{ for  } i=0,\dots, k-2\right] \, \bbP\left[T_{0,n}^0 =j_{k-1}\right]\\
&= \ldots = \prod_{i=0}^{k-1} \bbP\left[T_{0,n}^0 =j_i\right].   
\end{align*}
Summing this over all indices but $i$ shows that $\xi_i^{n}$ has the same law as $T_{0,n}^0$ for each $i$. This, (2'), and \eqref{212} with $n=1$ then show that for any $k\in\bbN$, 
\beq\lb{216}
\bbE\left[T_{0,k}^0\right]\leq \sum_{i=0}^{k-1}\bbE\left[\xi_i^1\right]=k\bbE\left[T_{0,1}^0\right]\leq C'k.
\eeq
Also, the above computation shows that $\xi_0^{n},\dots, \xi_{k-1}^n$ are jointly independent random variables  for all $n$ and $k$, so the strong law of large numbers yields
\[
\lim_{k\to\infty}\frac{1}{k}\sum_{i=0}^{k-1}\xi_i^{n} =\bbE\left[T_{0,n}^0 \right] \qquad\text{ almost surely}.
\]
Thus \eqref{212} and the equality in \eqref{211} yield that for any $\eps>0$ there is $n_\eps\in\bbN$ such that
\beq\lb{219}
\limsup_{k\to \infty}\frac{T_{0,kn_\eps}^0 }{k n_\eps}\leq \frac{\bbE\left[T_{0,n_\eps}^0 \right]}{n_\eps}\leq (1+\eps)\lim_{n\to \infty}\frac{\bbE\left[T_{0,n}^0 \right]}{n} \qquad\text{ almost surely}.
\eeq
Now fix any $l\in\{0,\dots,n_\eps-1\}$
and note that (1') yields for all $k\in\bbN_0$,
\beq\lb{217}
T_{0,kn_\eps+l}^0\leq T_{0,kn_\eps}^0+T_{kn_\eps,kn_\eps+l}^{T_{0,kn_\eps}^0}.
\eeq
Since $T_{kn_\eps,kn_\eps+l}^{T_{0,kn_\eps}^0}$ has the same distribution as $T_{0,l}^0$, we obtain from \eqref{216} that
\[
\sum_{k\geq 0}\bbP\left[T_{kn_\eps,kn_\eps+l}^{T_{0,kn_\eps}^0}>(kn_\eps+l)\eps \right]\leq \sum_{k\geq 0}\bbP\left[T_{0,l}^{0}>k\eps \right]\leq\frac{1}{\eps}\bbE\left[T_{0,l}^0\right]<\infty.
\]
Borel-Cantelli Lemma then implies that $\limsup_{k\to\infty}\frac{1}{kn_\eps+l}T_{kn_\eps,kn_\eps+l}^{T_{0,kn_\eps}^0}\leq \eps$ almost surely.
This and \eqref{217} for each $l\in\{0,\dots,n_\eps-1\}$, together with \eqref{219}, now show that 
\[
\limsup_{n\to\infty}\frac{T_{0,n}^{0}}n\leq \eps+ (1+\eps)\lim_{n\to \infty}\frac{\bbE\left[T_{0,n}^0 \right]}{n}  \qquad\text{ almost surely}.
\]
Taking $\eps\to 0$ now yields the inequality in \eqref{211}.

Next, for each $(t,m)\in\bbN_0^2$ let
    \[
    Z_m^t :=\liminf_{n\to \infty}\frac{T_{m,m+n}^t }{n}.
    \]
It follows from (6') that $Z_m^{t+Ck}$ is non-decreasing in $k\in\bbN$. But since the law of $Z_m^t$ is independent of $(t,m)$ by (3'), we must   almost surely have $Z_m^{t+Ck}=Z_m^t$ for all $k\in\bbN$. However, this and (3') imply that $Z_m^t $ is independent of $\calF^{-}_{t+Ck}$ for all $k\in\bbN$, while (4') shows that it is also measurable with respect to the $\sigma$-algebra generated by $\bigcup_{s\ge t} \calF^{-}_s$. This shows that there is a constant $Q\in [0,\infty)$ such that $Z_m^t=Q$ almost surely for each $(t,m)\in\bbN_0^2$.

In view of \eqref{211}, 
to prove \eqref{223} it remains to show that
    \beq\lb{110}
    Q\geq \lim_{n\to\infty}\frac{\bbE \left[T_{0,n}^0 \right]}{n}.
    \eeq
Our proof of this is related to the approach of Levental \cite{lev} in the $t$-independent case, which is in turn  based  on  \cite{KW}.  However, $t$-dependence complicates the situation here, which is why we first needed to show that $Z_m^t$ is in fact $(t,m,\omega)$-independent to conclude \eqref{110} (in \cite{lev}, it was sufficient to allow $\omega$-dependence at first).
Fix any $\eps>0$, and denote $Q_\eps:=Q+\eps$ and 
    \[
    N^{t}_m :=\min\left\{n\geq 1\,\big|\, T_{m,m+n}^t \leq n\, Q_\eps\right\} 
    \]
(which also depends on $\eps$ but we suppress this in the notation).
It follows from $Z_m^t=Q$ a.e.~that  almost surely we have $N^{t}_m<\infty$ for all $(t,m)\in\bbN_0^2$, and (3') yields that $N^{t}_m $ has the same distribution as $N^{0}_0 $. Moreover, $N_m^{t}$ is $\calF^{+}_t$-measurable   by (4').
Next, let $M_\eps\in [1,\infty)$  be a large constant such that
    \beq\lb{112}
    \bbE \left[T_{0,1}^0 1_{\left\{N^{0}_0 > M_\eps\right\}}\right]\leq \eps.
    \eeq

    Let now $t_0:=0$ and $r_0:= 0$, and for $k\geq 0$ define recursively
    \[
    r_{k+1}:=r_{k}+N^{t_{k}}_{r_{k}}1_{\left\{N^{t_{k}}_{r_{k}}\leq M_\eps\right\}} + 1_{\left\{N^{t_{k}}_{r_{k}}>M_\eps\right\}} 
    \qquad\text{and}\qquad    t_{k+1}:=t_{k}+T_{r_{k},r_{k+1}}^{t_{k}}.
    \]
Fix any $n\in\bbN$.  We will now use $\{r_k\}_{k\ge 1}$ to divide the ``propagation'' from $0$ to $n$ into several ``steps''.  Since this sequence is strictly increasing  for each $\omega\in\Omega$, the random variable
\[
K_n:=\min\{k\in\bbN_0 \,|\,r_k \ge n-M_\eps\}
\]
is well defined, and  satisfies $0\le K_n\leq n-1$ and $r_{K_n}\in[n-M_\eps, n-1]$. Then (1') yields
    \beq\lb{114}
    T_{0,n}^0 \leq \sum_{k=0}^{K_n -1} T_{r_{k} ,r_{k+1} }^{t_{k} } +T_{r_{K_n } ,n}^{t_{K_n } } =: S_n + T_{r_{K_n } ,n}^{t_{K_n } }
    \eeq
(note that, e.g., $T_{r_{K_n } ,n}^{t_{K_n } } = T_{r_{K_n(\cdot) }(\cdot),n}^{t_{K_n(\cdot)}(\cdot)}(\cdot)$).
If $N^{t_k}_{r_k}\leq M_\eps$, then
    \[
 T_{r_{k},r_{k+1}}^{t_{k}}\leq (r_{k+1}-r_{k})\, Q_\eps,
    \] 
while  if $N^{t_k}_{r_k}> M_\eps$, then $r_{k+1}=r_k+1$.
Hence
we obtain
\beq\lb{115}
{S_n} \leq \sum_{k=0}^{K_n-1}
(r_{k+1}-r_{k})\, Q_\eps+\sum_{k=0}^{K_n-1} T_{r_k,r_{k}+1}^{t_{k}} 1_{\left\{N^{t_{k}}_{r_{k}}>M_\eps\right\}}
\le r_{K_n} Q_\eps+ \sum_{k=0}^{n-1} T_{r_k,r_{k}+1}^{t_{k}}1_{\left\{N^{t_{k}}_{r_{k}}>M_\eps\right\}}.
\eeq

We now want to take expectation on both sides of \eqref{115}.
From (4') we see that for any $i,j\in\bbN_0$ we have $\{r_{k}=i \,\,\&\,\, t_k=j\}\in \calF^{-}_{j}$. Since $T_{i,i+1}^j$ and $N^{j }_{i}$ are $\calF_j^+$-measurable, from (5'), (3'), and \eqref{112} we obtain
\begin{align*}
   \bbE \left[ T_{r_{k} ,r_{k} +1}^{t_{k} } 1_{\left\{N^{t_{k} }_{r_{k} } >M_\eps\right\}}\right]
&=\sum_{i,j \ge 0}\bbE\left[T_{i,i+1}^{j} 1_{\left\{N^{j }_{i} >M_\eps\right\}} 1_{\left\{r_{k}=i \,\, \& \,\, t_{k}=j \right\} }\right]\\
&=\,\sum_{i,j \ge 0}\bbE\left[ T_{i,i+1}^{j} 1_{\left\{N^{j}_{i}>M_\eps\right\}}\right]\bbP\left[ r_{k} =i \,\, \& \,\, t_{k} =j\right]\\
&=\,\bbE\left[ T_{0,1}^0 1_{\left\{N^{0}_{0}>M_\eps\right\}}\right]\leq \eps.
\end{align*}
So \eqref{115} and  $r_{K_n}\leq n$ yield
    \beq\lb{113}
    \frac{\bbE\left[S_n \right]}{n}\leq  \frac{\bbE [r_{K_n}]Q_\eps}{n} + \eps\leq Q+2\eps.
    \eeq
    
    Finally, we claim that $\bbE \left[T_{r_{K_n } ,n}^{t_{K_n }}\right]\le C'M_\eps^2$; this together with \eqref{114} and   \eqref{113}, and then taking $\eps\to 0$, will yield \eqref{110}. To this end we note that  $1\le n-r_{K_n} \le M_\eps$ implies
    \beq\lb{300}
T_{r_{K_n } ,n}^{t_{K_n }} \le \max_{l\in\{1,\dots, \min\{M_\eps,n\}\}} T_{n-l ,n}^{t_{K_n }}.
\eeq
Since $\{t_{K_n}=j\}\in \calF^{-}_{j}$ and $T_{n-l,n}^j$ is $\calF_j^+$-measurable, we obtain from (5'), (3'), and \eqref{216},
\[
   \bbE \left[ T_{n-l ,n}^{t_{K_n }} \right]
   =\sum_{j \ge 0}\bbE\left[ T_{n-l ,n}^{j} 1_{\left\{ t_{K_n }=j \right\} }\right] 
= \sum_{j \ge 0}\bbE\left[ T_{n-l ,n}^{j} \right]   \bbP\left[  t_{K_n }=j \right] 
=\bbE\left[ T_{0 ,l}^{0} \right] \le C'l.
\]
Therefore indeed 
\[
\bbE \left[T_{r_{K_n } ,n}^{t_{K_n }}\right] \le \sum_{l=1}^{M_\eps} C'l \le C'M_\eps^2,
\]
so \eqref{110} holds and the proof is finished.
\end{proof}

\begin{proof}[Proof of Theorem \ref{T.1.1}]
Let us first assume that $c\ge 1$ and  define 
\[
T_{m,n}^t:= \lceil X_{m,n}^t +C \rceil \qquad (\in\bbN_0).
\]
Let us redefine $\calF_t^-$ to be $\calF_{t-C}^-$ for $t\ge C$ and $\{\emptyset,\Omega\}$ for $t\in[0,C)$ (i.e., shift $\calF_t^-$ to the right by $C$)
 and let $C':=\bbE\left[\lceil X_{0,1}^0 +C \rceil\right]$.
After restricting $t$ to $\bbN_0$, it is clear that $T_{m,n}^t$ satisfies hypotheses (2')--(6') of Theorem \ref{T.2.5}, with $\max\{ \lceil C\rceil,1\}$ in place of $C$.
And  (1') also holds because if $n>k>m\geq 0$ are integers, then
($1$) and  ($6$) with $s:=\lceil X_{m,n}^t +C \rceil- X_{m,n}^t$ yield
\[
T_{m,n}^t = \lceil X_{m,n}^t +C \rceil \leq \left\lceil X_{k,n}^{t+X_{m,k}^t} + X_{m,k}^t +C \right\rceil
\leq \left\lceil X_{k,n}^{t+T_{m,k}^t}+T_{m,k}^t +C \right\rceil  \le  T_{k,n}^{t+T_{m,k}^t}+T_{m,k}^t.
\]
Hence \eqref{223} 
proves \eqref{4.1} with the last numerator being $\bbE\left[\lceil X_{0,n}^0 +C \rceil\right]$.  Note that this argument also applies in the setting of the last claim in Theorem \ref{T.1.1} and without $\lceil\cdot\rceil$.

To get \eqref{4.1} as stated and for any $c>0$, let $S\ge \frac 1c$,  $\calG_t^\pm:=\calF_{t/S}^\pm$, and $Y_{m,n}^t:=SX_{m,n}^{t/S}$.
Since the above argument applies with $(\calG_t^\pm,Y_{m,n}^t,SC, Sc)$ in place of $(\calF_t^\pm,X_{m,n}^t,C,c)$, we obtain \eqref{4.1} with the last numerator being $\bbE\left[\frac 1S\lceil S(X_{0,n}^0+C)\rceil\right]$.  Taking $S\to\infty$ yields \eqref{4.1}.
\end{proof}

\section{Time-Decaying Dependence I}  \lb{S3}

In this section we will prove the first claim in Theorem~\ref{T.1.2} and the corresponding integer-valued claim.  Let us first prove a version of the latter with weaker ($2^{*}$) and stronger ($5^{*}$).

\begin{theorem} \lb{T.2.1}
Let $(\Omega,\bbP,\calF)$ be a probability space, and $\{\calF^{\pm}_t\}_{t\in\bbN_0}$  two filtrations satisfying \eqref{4.5}.
For any integers $t\ge 0$ and $n>m\geq 0$, let $X_{m,n}^t:\Omega\to \bbN_0$ be a random variable.
Let there be $C\in \bbN$ such that  for all such $t,m,n$ we have (1) and (3) from Theorem \ref{T.1.1}, and
\begin{enumerate}
\item[($2^{**}$)] $\bbE \left[X_{0,1}^0\right]+\bbE \left[( X_{0,1}^0)^2\right]<\infty$;

        \smallskip
        
\item[($4^{**}$)] $X_{m,n}^t$ is $\calF^+_t$-measurable, and 
        $\{X_{m,n}^t=j\}\in \calF^{-}_{t+j}$ for any $j\in\bbN_0$; 
        
        \smallskip
        
\item[($5^{**}$)]  $\lim_{s\to\infty} \phi(s)=0$, where 
\[
\phi(s):=\sup\left\{\left|\frac{\bbP[F|E] }{\bbP[F]}-1\right| \,\bigg|\,\,  t\in\bbN_0 \,\,\&\,\, (E,F)\in \calF_t^- \times \calF_{t+s}^+ \,\,\&\,\, \bbP[E]\bbP[F]>0 \right\}.
\]

        \smallskip
        
         \item[($6^{**}$)]  $X_{m,n}^t\leq X_{m,n}^{t+C}+C$.  
\end{enumerate}
Then \eqref{306} holds.
\end{theorem}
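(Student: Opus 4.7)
I would mirror the three-part structure of the proof of Theorem \ref{T.2.5}, replacing each use of the independence hypothesis (5') by the quantitative mixing estimate ($5^{**}$); the key new ingredient is that a temporal buffer of arbitrary length $JC$ can always be inserted at additive cost $JC$ via iterated ($6^{**}$).  I would then upgrade the resulting almost sure lower bound and the convergence of $\bbE[X_{0,n}^0]/n$ to $L^1$-convergence (hence to \eqref{306}) via uniform integrability coming from the second-moment hypothesis in ($2^{**}$).  To begin, I would prove existence of $\gamma:=\lim_n \bbE[X_{0,n}^0]/n$:  combining (1) with iterated ($6^{**}$) yields $X_{0,n+m}^0 \le X_{0,n}^0+X_{n,n+m}^{X_{0,n}^0+kC}+kC$ for every $k\in\bbN$, and since $\{X_{0,n}^0=j\}\in\calF_j^-$ (by ($4^{**}$)) while $X_{n,n+m}^{j+kC}$ is $\calF_{j+kC}^+$-measurable, summing over $j$ and applying ($5^{**}$) together with stationarity (3) gives the approximate subadditivity
\[
\bbE[X_{0,n+m}^0]\,\le\,\bbE[X_{0,n}^0]+(1+\phi(kC))\bbE[X_{0,m}^0]+kC.
\]
Iterating this with $n=jm$, dividing by $jm$, and sending $j\to\infty$, then $m\to\infty$, then $k\to\infty$ collapses $\limsup$ to $\liminf$ and produces $\gamma$.

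For the almost sure inequality $\liminf_n X_{0,n}^0/n\ge\gamma$, set $Z_m^t:=\liminf_n X_{m,m+n}^t/n$.  By (3) and ($6^{**}$) (which gives $Z_m^t\le Z_m^{t+C}$ pointwise, so equality in distribution forces $Z_m^t=Z_m^{t+kC}$ a.s.\ for every $k$), $Z_m^0$ agrees almost surely with the $\calF_{kC}^+$-measurable variable $Z_m^{kC}$ for each $k$, hence represents an event in the tail $\bigcap_k\calF_{kC}^+$.  By ($5^{**}$), any such tail event is independent of $\calF_t^-$ for every $t\in\bbN_0$; since also $Z_m^0\in\sigma(\bigcup_t\calF_t^-)$ by ($4^{**}$), $Z_m^0$ is independent of itself and hence almost surely equal to a constant $Q$.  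To show $Q\ge\gamma$, I would reprise the Levental-type block construction from the proof of Theorem \ref{T.2.5} with the following substitution:  fix $J$ so large that $\phi(JC)\le 1$, and replace the stopping time $N_m^t$ by
\[
\tilde N_m^t:=\min\bigl\{n\ge 1\,\big|\, X_{m,m+n}^{t+JC}\le nQ_\eps-JC\bigr\},
\]
which is $\calF_{t+JC}^+$-measurable, almost surely finite (since $Z_m^t=Q<Q_\eps$ a.s.), and dominates $N_m^t$ by ($6^{**}$).  Wherever the proof of Theorem \ref{T.2.5} splits an expectation via (5'), instead shift $X_{i,i+1}^j\le X_{i,i+1}^{j+JC}+JC$ by ($6^{**}$), replace $N_i^j$ by $\tilde N_i^j$, and then apply ($5^{**}$) on the pair $(\calF_j^-,\calF_{j+JC}^+)$, losing only a factor $(1+\phi(JC))\le 2$.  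The remainder of the argument of Theorem \ref{T.2.5} then goes through and gives $\gamma\le Q+O(\eps)$ for every $\eps>0$.

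It remains to upgrade this almost sure lower bound together with $\bbE[X_{0,n}^0/n]\to\gamma$ to \eqref{306}.  Iterating (1) gives $X_{0,n}^0\le\sum_{i=0}^{n-1}X_{i,i+1}^{t_i}$ with $t_i$ built recursively, so Cauchy--Schwarz yields $(X_{0,n}^0)^2\le n\sum_i(X_{i,i+1}^{t_i})^2$.  Applying the same shift-and-mix device used just above to bound $\bbE[(X_{i,i+1}^{t_i})^2]$, together with the second-moment bound in ($2^{**}$) and stationarity (3), bounds each summand by a constant independent of $(t,i)$, so $\bbE[(X_{0,n}^0/n)^2]\le C'$ and hence $\{X_{0,n}^0/n\}_n$ is uniformly integrable.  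Since $\liminf_n X_{0,n}^0/n\ge\gamma$ almost surely and $0\le(\gamma-X_{0,n}^0/n)^+\le\gamma$, dominated convergence gives $\bbE[(X_{0,n}^0/n-\gamma)^-]\to 0$; combined with $\bbE[X_{0,n}^0/n]\to\gamma$ this forces $\bbE[|X_{0,n}^0/n-\gamma|]\to 0$, which yields \eqref{306}.

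The main obstacle is the Levental-type step.  In Theorem \ref{T.2.5} the independence hypothesis (5') makes the key sum $\sum_{i,j}\bbE[X_{i,i+1}^j \mathbf{1}_{N_i^j>M_\eps}\mathbf{1}_{r_k=i,\,t_k=j}]$ split exactly, whereas here $\{r_k=i,t_k=j\}\in\calF_j^-$ and the integrand is $\calF_j^+$-measurable, so ($5^{**}$) is vacuous at zero temporal separation.  Manufacturing the buffer $JC$ through ($6^{**}$) fixes this but forces the introduction of the shifted stopping analog $\tilde N_m^t$, and the delicate bookkeeping consists in verifying that $\tilde N_m^t$ still enjoys almost sure finiteness, distributional identity across $(t,m)$ under (3), and the domination $N_m^t\le\tilde N_m^t$ needed to drive the argument.
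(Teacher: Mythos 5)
Your proposal takes a genuinely different route from the paper's. The paper's proof of Theorem~\ref{T.2.1} establishes no almost-sure statement at all: it adds a buffer constant $C_\eps$ to $X^t_{m,n}$ to create a temporal gap of length $C_\eps$, shows that the blocks $\xi^n_i=T^{t^n_i}_{in,(i+1)n}$ are nearly uncorrelated in the precise sense $\Cov[\xi^n_i,\xi^n_j]\le C_*\eps n^2$ (the key estimates \eqref{3claim}), and then gets the upper bound in probability via Chebyshev and the lower bound via a separate second-moment contradiction argument. You instead prove the a.s.\ lower bound $\liminf_n X^0_{0,n}/n\ge\gamma$ (effectively reproving \eqref{4.1''}, i.e.\ Remark~3) by a Levental-type block argument, and then upgrade to convergence in probability through $L^1$ convergence. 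Your $L^1$ upgrade step is clean and correct, and indeed simpler than the paper's two-sided Chebyshev argument — though the uniform-integrability digression is superfluous: once $\bbE[X^0_{0,n}/n]\to\gamma$, dominated convergence applied to $(\gamma-X^0_{0,n}/n)^+\le\gamma$ already gives the $L^1$ limit. Your tail-$\sigma$-algebra argument for constancy of $Z^0_0$ is also slicker than the paper's covariance/Egorov computation in Section~\ref{S3'}.

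The one place you need to be careful is the Levental step. You say you ``replace $N^t_m$ by $\tilde N^t_m$,'' but if $\tilde N^t_m$ enters the recursive definition of $(r_k,t_k)$ then the measurability $\{r_k=i,\ t_k=j\}\in\calF^-_j$ — which the argument relies on to factor expectations — is undermined, since $\tilde N^{t_k}_{r_k}$ is only $\calF^+_{t_k+JC}$-measurable. The workable version is to keep $N^t_m$ in the recursion exactly as in the proof of Theorem~\ref{T.2.5} (this is what preserves the measurability structure and the pointwise bound $X^{t_k}_{r_k,r_{k+1}}\le(r_{k+1}-r_k)Q_\eps$ on the ``good'' blocks), and use $N^j_i\le\tilde N^j_i$ only to enlarge the indicator when estimating the error term, so that after replacing $X^j_{i,i+1}\le X^{j+JC}_{i,i+1}+JC$ the integrand $(X^{j+JC}_{i,i+1}+JC)\,1_{\{\tilde N^j_i>M_\eps\}}$ is $\calF^+_{j+JC}$-measurable and ($5^{**}$) applies at separation $JC$. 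This is essentially the device the paper uses in Section~\ref{S3'}, except there the separation is manufactured by adding $C_\eps$ to $X$ (so that ($4''$) automatically shifts the $\calF^-$ index), which is arguably cleaner than your explicit two-object bookkeeping with $N$ and $\tilde N$; but the net effect is the same. So, modulo pinning down this one ambiguity, your approach works and proves slightly more than the paper's proof of this particular theorem, at the cost of importing the whole Levental machinery that the paper reserves for the almost-sure statements.
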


\begin{proof}
From ($5^{**}$) we know that  for each $\eps>0$, there is $C_\eps\in \bbN$ which is a multiple of $C$ from ($6^{**}$) and
\beq\lb{301}
\phi( C_{\eps} )\le \eps.
\eeq
Let us then define (again suppressing $\eps$ in the notation for the sake of clarity)
\[
T_{m,n}^{t}:=X_{m,n}^t+C_{\eps}.  
\]
As before, one can easily check that (1), ($2^{**}$), (3), ($5^{**}$), and ($6^{**}$) still hold with $X_{m,n}^{t}$ replaced by $T_{m,n}^{t}$, and ($4^{**}$) can be replaced by
\begin{itemize}
    \item[($4$'')] $T_{m,n}^{t}$ is $\calF^+_t$-measurable, and 
        $\{ T_{m,n}^{t}=j\}\in \calF^{-}_{t+j-C_{\eps}}$ for any $j\in\bbN_0$.
\end{itemize}

Next, let
\beq\lb{319}
\underline{X}:=\liminf_{n\to \infty}\frac{\bbE\left[X_{0,n}^0\right]}{n}.
\eeq
As before, for any $\eps>0$ and $n\in\bbN$, let $t^{n}_{0}:=0$ and $\xi_0^{n}:=T_{0,n}^{0}$, and then for $i\in\bbN$ define recursively
\[
 t^{n}_{i} :=t^{n}_{i-1} +\xi_{i-1}^{n} ,\qquad   \xi_i^{n} :=T_{in,(i+1)n}^{t^{n}_{i} }, \qquad\text{and}\qquad \mu_i^{n}:=\bbE\left[\xi_i^{n}\right].
\]
By (1) we have $T_{0,kn}^{0}\leq\sum_{i=0}^{ k-1}\xi_i^{n}$ for each $k\in\bbN$. Also, since ($4$'') yields
\beq\lb{401}
\{t_i^{n} =k\}\in \calF^-_{k-C_{\eps}}\qquad\text{and}\qquad \{T_{in,(i+1)n}^{k} =j\}\in \calF^+_{k},
\eeq
it follows from (3)  and \eqref{301} that
\beq\lb{304}
\begin{aligned}
\mu_i^{n}&=\sum_{k,j\geq 0} j\,\bbP\left[T_{in,(i+1)n}^{k}=j \,\, \& \,\,  t_i^{n}=k\right]\leq (1+\eps)\sum_{k,j\geq 0} j\,\bbP\left[T_{in,(i+1)n}^{k}=j\right]\bbP\left[ t_i^{n}=k\right]\\
&=(1+\eps)\sum_{j\geq 0} j\,\bbP\left[T_{0,n}^{0}=j\right]=(1+\eps)\bbE\left[ T_{0,n}^{0}\right]=(1+\eps)\mu_0^{n}.
\end{aligned}
\eeq
We can similarly obtain
\beq\lb{309}
\bbE\left[\xi_i^{n}-C_{\eps}\right]\leq (1+\eps)(\mu_0^{n}-C_{\eps})
\eeq 
and
\beq\lb{309'}
\bbE\left[\xi_i^{n}-C_{\eps}\right]\geq (1-\eps)(\mu_0^{n}-C_{\eps})
\eeq 
because $\xi_i^n\ge C_\eps$.
Invoking (1) and \eqref{304} with $n=1$ yields for $C'_\eps:=\bbE[X_{0,1}^0]+C_\eps$,
\beq\lb{320}
\mu_0^{n}\leq \sum_{i=0}^{n-1}\mu_i^{1}\leq (1+\eps)n\mu_0^{1}\leq (1+\eps) C'_{\eps} n.
\eeq
This implies that $\underline{X} \le (1+\eps)C'_\eps$.

Let us now pick $n_\eps\in \bbN$  such that
\beq\lb{303}
\frac{\bbE\left[T_{0,n_\eps}^{0}\right]}{n_\eps}\leq \underline{X}+\eps,
\eeq
which exists by \eqref{319}.
Then ($4$'') shows that for any integers $n>m>0$ and $i,j\in\bbN_0$ we get
\[
\{T_{0,m}^{0} =i\}\in \calF^-_{i-C_{\eps}}\qquad\text{and}\qquad \{T_{m,n}^{i} =j\}\in \calF^+_{i}.
\]
Therefore (3) and \eqref{301} yield
\beq\lb{307}
\begin{aligned}
\bbE\left[T_{m,n}^{T_{0,m}^{0}}\right] &=\sum_{i,j\geq 0} j\,\bbP\left[T_{m,n}^{i}=j \,\, \& \,\,  T_{0,m}^{0}=i\right]\leq (1+\eps)\sum_{i,j\geq 0} j\,\bbP\left[T_{m,n}^{i}=j\right]\bbP\left[ T_{0,m}^{0}=i\right]\\
&=(1+\eps)\sum_{j\geq 0} j\,\bbP\left[T_{0,n-m}^{0}=j\right]=(1+\eps)\bbE\left[ T_{0,n-m}^{0}\right].
\end{aligned}
\eeq
For any $n\in\bbN$ write $n=kn_\eps+l$, where $k\in\bbN_0$ and $l\in\{0,\dots,n_\eps-1\}$. By applying (1) and the above computations recursively, we obtain
\begin{align*}
\bbE\left[T_{0,n}^{0}\right]&\leq \bbE\left[T_{0,(k-1)n_\eps+l}^{0}\right]+\bbE\left[T_{(k-1)n_\eps+l,n}^{T_{0,(k-1)n_\eps+l}^{0}}\right]\leq \bbE\left[T_{0,(k-1)n_\eps+l}^{0}\right]+(1+\eps)\bbE\left[T_{0,n_\eps}^{0}\right]\\
&\leq \dots \leq   \bbE\left[T_{0,l}^{0}\right]+(1+\eps)k\,\bbE\left[T_{0,n_\eps}^{0}\right].
\end{align*}
Thus \eqref{303} yields
\[
\frac{\bbE\left[T_{0,n}^{0}\right]}{n}\leq \frac{\bbE\left[T_{0,l}^{0}\right]}{n}+\frac{(1+\eps)kn_\eps(\underline{X}+\eps)}{n}
\]
which then implies 
\[
\limsup_{n\to\infty} \frac{\bbE\left[X_{0,n}^{0}\right]}{n}=\limsup_{n\to\infty} \frac{\bbE\left[T_{0,n}^{0}\right]}{n}\leq (1+\eps)(\underline{X}+\eps).
\]
Since $\eps>0$ was arbitrary, this and \eqref{319} show that 
\beq\lb{305}
\lim_{n\to \infty}\frac{\bbE\left[X_{0,n}^0\right]}{n}=\underline{X}.
\eeq

Next we claim that there is $C_*>0$ such that for any $\eps\in (0,1]$, $n\in\bbN$, and  $i\neq j$ we have
\beq\lb{3claim}
\Var\left[ \xi_i^{n}\right]\leq C_* n^2 \qquad\text{and}\qquad \Cov\left[\xi_i^{n},\xi_j^{n}\right]\leq C_*\eps n^2.
\eeq
We postpone the proof of \eqref{3claim} to the end of the proof of (i).
Since $t_k^{n}=\sum_{i=0}^{k-1}\xi_i^{n}$, we now have
\[
\Var\left[t_k^{n}\right] =\sum_{i=0}^{k-1}\Var\left[ \xi_i^{n}\right] +2\sum_{i=0}^{k-1}\sum_{j=i+1}^{k-1}\Cov\left[\xi_i^{n},\xi_j^{n}\right] \leq   (1+\eps k) C_* n^2k.
\]
Chebyshev's inequality then yields 
\[
\bbP\left[\left|\frac{t_k^{n}-\bbE[t_k^{n}]}{k}\right| > C_\eps \right]\leq  \frac{\Var\left[t_k^{n}\right]}{C_\eps^2 k^2} \leq 
 \frac{1+\eps k}{C_\eps^2 k} C_* n^2.
\]
Since $\bbE[t_k^{n}]=\sum_{i=0}^{k-1}\mu_i^{n}$,  this and \eqref{304} imply
\[
\bbP\left[\frac{t_k^{n}}{k} > (1+\eps)\mu_0^{n}+ C_\eps  \right]\leq \frac{1+\eps k}{C_\eps^2 k} C_* n^2.
\]

For any $N\in\bbN$ write $N=kn+l$, where $k\in\bbN_0$ and $l\in\{0,\cdots,n-1\}$. Then (1) yields
\[
X_{0,N}^0\le T_{0,N}^{0}\leq T_{0,kn}^{0}+T_{kn,kn+l}^{T_{0,kn}^{0}}\leq t_k^{n}+T_{kn,kn+l}^{T_{0,kn}^{0}}.
\]
Denoting $\tau^{n}_N:=T_{kn,kn+l}^{T_{0,kn}^{0}}$, we get $\bbE[\tau^{n}_N]\leq (1+\eps)^2 C'_{\eps} n$ by  \eqref{307} and \eqref{320}, as well as 
\beq\lb{3.0}
\bbP\left[\frac{X_{0,N}^{0}}{N}>(1+\eps)\frac{\mu_0^{n}}{n}+\frac{ C_\eps }{n}+\frac{\tau^{n}_N}{kn}\right]\leq 
\frac{1+\eps k}{C_\eps^2 k} C_* n^2.
\eeq
Now, for each $\eps>0$ pick $n_\eps\in\bbN$ such that
\beq\lb{3.1}
\lim_{\eps\to 0} \frac {C_\eps}{n_\eps}=0 = 
\lim_{\eps\to 0} \frac{\eps n_\eps^2}{C_\eps^2}.
\eeq
If we then take $n=n_\eps$ in \eqref{3.0} and then $N\to\infty$ (so that $k\to\infty$), for each $\delta>0$ we obtain
\[
\limsup_{N\to\infty} \bbP\left[\frac{X_{0,N}^{0}}{N}>(1+\eps)\frac{\mu_0^{n_\eps}}{n_\eps}+\frac{ C_\eps }{n_\eps}+\delta \right]\leq 
\frac{C_* \eps n_\eps^2 }{C_\eps^2}.
\]
  Since $\mu_0^{n_\eps}=\bbE[X_{0,n_\eps}^0]+C_\eps$ and $\lim_{\eps\to 0} n_\eps=\infty$ by \eqref{3.1}, taking $\eps\to 0$ in this estimate and using  \eqref{305} and \eqref{3.1} shows that $\lim_{N\to\infty} \bbP\left[\frac{X_{0,N}^{0}}{N}> \underline{X}+2\delta \right]=0$ for all $\delta>0$.  That is,
\beq\lb{308}
\limsup_{N\to\infty}\frac{X_{0,N}^{0}}{N}
\leq \underline{X} \qquad\text{ in probability}.
\eeq

Let us now assume that there is $\delta>0$ and a sequence $n_k\to\infty$ such that
\beq\lb{361}
    \bbP\left[ \frac{X_{0,n_k}^{0}}{n_k}-\underline{X}<-2\delta\right]\geq 4\delta.
\eeq
Since for all large enough $k$ we have $\frac{\bbE [X_{0,n_k}^0]}{n_k}\ge \underline{X} - \delta$ by  \eqref{305}, we obtain
\begin{align*}
    &\bbP\left[ \frac{X_{0,n}^{0}}{n}-\underline{X}<-2\delta\right]\leq \bbP\left[ \frac{X_{0,n_k}^{0} - \bbE\left[X_{0,n_k}^{0}\right]}{n_k} <-\delta\right]\leq \frac{1}{\delta}\,\bbE\left[\left(\frac{X_{0,n_k}^{0} - \bbE\left[X_{0,n_k}^{0}\right]}{n_k}\right)_+\right]\\
    &\qquad\qquad\qquad\leq  \frac{1}{\delta}\left(\delta^2+\bbP\left[ \frac{X_{0,n_k}^{0} - \bbE\left[X_{0,n_k}^{0}\right]}{n_k} >\delta^2 \right]+\sum_{i\geq 1} \bbP\left[ \frac{X_{0,n_k}^{0} - \bbE\left[X_{0,n_k}^{0}\right]}{n_k} >i\right]\right).
\end{align*}
Since $\Var[X_{0,n_k}^0] \le C_* n_k^2$ by \eqref{3claim} with $i=0$, for $M:=\lceil \frac{C_*}{\delta^2}\rceil+1 $ we obtain
\begin{align*}
\sum_{i\geq M}\bbP\left[ \frac{X_{0,n_k}^{0} - \bbE\left[X_{0,n_k}^{0}\right]}{n_k} >i\right]
\leq \sum_{i\geq M}\frac{1}{n_k^2i^2}\Var\left[X_{0,n_k}^0 \right]\leq \delta^2.
\end{align*}
But \eqref{305} and \eqref{308} also show that for all large enough $k$ we have 
\[
\bbP\left[  \frac{X_{0,n_k}^{0} - \bbE\left[X_{0,n_k}^{0}\right]}{n_k} >\delta^2 \right]
+\sum_{i= 1}^{M-1} \bbP\left[  \frac{X_{0,n_k}^{0} - \bbE\left[X_{0,n_k}^{0}\right]}{n_k} >i\right]\leq \delta^2.
\]
Hence for all large enough $k$ we obtain
\[
\bbP\left[ \frac{X_{0,n_k}^{0}}{n_k}-\underline{X}<-2\delta\right]\leq 3\delta,
\]
which contradicts  \eqref{361}. It follows that $\lim_{n\to\infty}\bbP\left[ \frac{X_{0,n}^{0}}{n}-\underline{X}<-\delta\right]=0$ for each $\delta>0$, so this and \eqref{308} yield \eqref{306}.

It therefore remains to prove \eqref{3claim}.  Similarly as in \eqref{304}, for any $(i,n)\in\bbN_0\times\bbN$ and with $\tilde{\xi}_i^{n}:={\xi}_i^{n}-C_{\eps}$, we get 
\[
\bbE\left[\left(\tilde{\xi}_i^{n}\right)^2\right]\leq (1+\eps)\bbE\left[\left(T_{0,n}^{0}-C_\eps\right)^2\right]=(1+\eps)\bbE\left[\left(X_{0,n}^{0}\right)^2\right],
\]
as well as
\beq\lb{312}
\bbE\left[\left({\xi}_i^{1}\right)^2\right]\leq (1+\eps)\bbE\left[\left(T_{0,1}^{0}\right)^2\right].
\eeq
Since $\Var[\xi_i^n]=\Var[\tilde{\xi}_i^n]$, to prove the first claim in \eqref{3claim}, it suffices to show $\bbE [\left(X_{0,n}^{0}\right)^2]\leq \frac{C_*}2 n^2$ for some $C_*>0$ and all $n\in\bbN$.  We can use $T_{0,n}^{0}\leq  
\sum_{i=0}^{n-1} \xi_i^{1}$ (due to (1)) and  \eqref{312} to obtain
\begin{align*}
\bbE\left[\left(X_{0,n}^{0}\right)^2\right] \leq \bbE\left[\left(T_{0,n}^{0}\right)^2\right]
\leq n \bbE \left[\sum_{i=0}^{n-1} (\xi_i^{1})^2\right]
\leq (1+\eps) n^2 \bbE\left[\left(T_{0,1}^{0}\right)^2\right],
\end{align*}
which yields this estimate with $C_*:= 4 \bbE\left[\left(X_{0,1}^{0}+C_{1}\right)^2\right]$.  

To prove the second claim in \eqref{3claim}, 
we apply ($4^{*}$) to get that for any $i,i',j,j',k,l\in \bbN_0$ satisfying $l>k$ that
\beq  \lb{9.1}
\begin{aligned}
\{t_k^{n} =i'\}\in \calF^-_{i'-C_{\eps}}, &\qquad \{T_{kn,(k+1)n}^{i'} =i\}\in  \calF^-_{i'+i-C_{\eps}} \cap \calF^+_{i'}, \\
\{t_l^{n} =j'\}\in \calF^-_{j'-C_{\eps}}, &\qquad  \{T_{ln,(l+1)n}^{j'} =j\}\in \calF^+_{j'}.
\end{aligned}
\eeq
Note that $l>k$ implies $j'\geq i+i'$  whenever $ \bbP [T_{kn,(k+1)n}^{i'}=i \,\, \& \,\, t_k^{n}=i'\,\, \& \,\, t_l^{n}=j']>0$  because
\[
t_l^{n}=t_k^{n}+\xi_{k}^{n}+\cdots+\xi_{l-1}^{n}\geq t_k^{n}+\xi_{k}^{n}=t_k^{n}+T_{kn,(k+1)n}^{t_k^{n}}.
\]
Recalling that $\tilde \xi_k^{n}=T_{kn,(k+1)n}^{t_k^{n}}-C_\eps\ge 0$,
it follows from the above, (3), \eqref{301}, and \eqref{309} that
\begin{align*}
    \bbE & \left[\tilde{\xi}_k^{n}\tilde{\xi}_l^{n}\right]=\sum_{i,j,i',j'\geq 0}(i-C_{\eps})(j-C_{\eps})\,   
    \bbP\left[ T_{ln,(l+1)n}^{j'}=j \,\, \& \,\, t_l^{n}=j'  \,\, \& \,\,  T_{kn,(k+1)n}^{i'}=i \,\, \& \,\, t_k^{n}=i' \right]\\
    & \leq (1+\eps) \sum_{i,j,i',j'\geq 0}(i-C_{\eps})(j-C_{\eps})\,
    \bbP\left[T_{ln,(l+1)n}^{j'}=j\right] \bbP\left[ t_l^{n}=j' \,\, \& \,\,  T_{kn,(k+1)n}^{i'}=i \,\, \& \,\, t_k^{n}=i' \right] \\
    & = (1+\eps) \sum_{i,j,i'\ge 0}(i-C_{\eps})(j-C_{\eps})\,
    \bbP\left[T_{0,n}^{0}=j\right]  \bbP\left[T_{kn,(k+1)n}^{i'}=i  \,\, \& \,\, t_k^{n}=i'\right] \\
   &  \leq (1+\eps) \bbE\left[\tilde{\xi}^{n}_0 \right]\bbE\left[\tilde{\xi}^{n}_k\right]
    \leq (1+\eps)^2 
  \bbE\left[\tilde{\xi}^{n}_0\right] ^2,
\end{align*}
where we used that the summands are zero whenever $j'<i'+i$.
Also note that \eqref{309'} yields 
\[
\bbE\left[\tilde{\xi}^{n}_k\right]
    \bbE\left[\tilde{\xi}^{n}_l\right]\geq (1-\eps)^2    \bbE\left[\tilde{\xi}^{n}_0\right] ^2,
\]
hence
\beq\lb{321}
\Cov\left[\xi_k^{n},\xi_l^{n}\right]=\Cov\left[\tilde{\xi}_k^{n},\tilde{\xi}_l^{n}\right]\leq 
4\eps   \bbE\left[\tilde{\xi}^{n}_0\right] ^2 = 4\eps   \bbE\left[ X_{0,n}^0 \right] ^2.
\eeq
Now the second claim in \eqref{3claim} follows by \eqref{305}, and the proof of (i) is finished.
%
\end{proof}

Next we adjust this proof to obtain the integer-valued version of the first claim in Theorem~\ref{T.1.2}.  We will use in it the following lemma.

\begin{lemma} \lb{L.3.0}
For $E,F\in\calF$, let $\Psi(E,F):=\max\{s\in\bbZ \,|\,(E,F)\in\calF_{t}^- \times \calF_{t+s}^+ \text{ for some $t\in\bbN_0$}\}$ (if there is no such $s$, then $\Phi(E,F):=-\infty$).  Assume that $A_j^k,B_k\in\calF$  ($k,j\in\bbN_0$) are such that $B_0,B_1,\dots$ are pairwise disjoint, and so are $A_0^k,A_1^k,\dots$ for each $k\in\bbN_0$.  If $s:=\min\{\Psi(B_k,A_j^k)\,|\, j,k\in\bbN_0\}\ge 0$ and  $S:=\sup\{f(j,k)\,|\,\bbP[A_j^k\cap B_k]>0\}$ for some $f:\bbN_0^2\to[0,\infty)$, then with $\phi$ from Remark 2 after Theorem \ref{T.1.2} we have
\[
 \sum_{j,k\ge 0}  f(j,k)   \left|\bbP[A_j^k\cap B_k] - \bbP[A_j^k]\bbP[B_k] \right| \le 2S\phi(s).
\]
\end{lemma}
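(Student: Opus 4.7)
My plan is to combine a sign split of the correlation errors $\delta_{j,k}:=\bbP[A_j^k\cap B_k]-\bbP[A_j^k]\bbP[B_k]$ with a layer-cake representation of $f$, reducing the bound to a family of applications of Remark 2 after Theorem~\ref{T.1.2}, one per level $r$ of $f$ and per sign $\sigma\in\{+1,-1\}$.

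\emph{Step 1 (uniform $t_k$).} First I would verify that for each $k$ one may choose a single $t_k\in\bbN_0$ with $B_k\in\calF_{t_k}^-$ and $A_j^k\in\calF_{t_k+s}^+$ for \emph{every} $j\ge 0$ simultaneously. Take $t_k$ as the least integer with $B_k\in\calF_{t_k}^-$. The pointwise hypothesis $\Psi(B_k,A_j^k)\ge s$ supplies some $t_{j,k}\ge t_k$ with $A_j^k\in\calF_{t_{j,k}+s}^+$, and since $\calF^+$ is decreasing in its index this forces $A_j^k\in\calF_{t_k+s}^+$.

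\emph{Step 2 (per-level estimate).} Fix $r\ge 0$, $\sigma\in\{+1,-1\}$, and let $J_k^{r,\sigma}:=\{j\ge 0 \,|\, f(j,k)>r\text{ and }\sigma\delta_{j,k}>0\}$. Set
\[
F_k^{r,\sigma}:=\bigcup_{j\in J_k^{r,\sigma}}A_j^k\ \in\ \calF_{t_k+s}^+.
\]
Since the $A_j^k$ are pairwise disjoint in $j$,
\[
\bbP\bigl[F_k^{r,\sigma}\cap B_k\bigr]-\bbP\bigl[F_k^{r,\sigma}\bigr]\bbP[B_k]=\sum_{j\in J_k^{r,\sigma}}\delta_{j,k},
\]
and all summands on the right share sign $\sigma$, so the absolute value of the left side equals $\sum_{j\in J_k^{r,\sigma}}|\delta_{j,k}|$. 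The family $\{(B_k,F_k^{r,\sigma})\}_{k\ge 0}$ is admissible for Remark 2 ($B_k$ pairwise disjoint in $k$, and $(B_k,F_k^{r,\sigma})\in\calF_{t_k}^-\times\calF_{t_k+s}^+$), so summing over $k$ yields
\[
\sum_{(j,k):\,f(j,k)>r,\,\sigma\delta_{j,k}>0}|\delta_{j,k}|\le\phi(s).
\]
Adding the two signs, for every $r\ge 0$,
\[
\sum_{(j,k):\,f(j,k)>r}|\delta_{j,k}|\le 2\phi(s).
\]

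\emph{Step 3 (layer-cake integration).} Writing $f(j,k)=\int_0^\infty 1_{\{r<f(j,k)\}}\,dr$ and applying Fubini,
\[
\sum_{j,k}f(j,k)|\delta_{j,k}|=\int_0^\infty\sum_{(j,k):\,f(j,k)>r}|\delta_{j,k}|\,dr.
\]
Every $(j,k)$ that contributes nontrivially to the inner sum satisfies $\bbP[A_j^k\cap B_k]>0$, and hence $f(j,k)\le S$ by the definition of $S$; thus the integrand vanishes for $r\ge S$, and the integral is bounded by $2S\phi(s)$, as claimed.

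The main obstacle is arranging things so that Remark 2 can be invoked with a \emph{single} event $F_k$ per $k$: naively unioning all the $A_j^k$ would let positive and negative $\delta_{j,k}$ cancel and destroy the $|\cdot|$ structure. The sign split in Step 2 is precisely what restores it, and produces the factor $2$ in the final bound. A secondary point is the uniform choice of $t_k$ in Step 1, which is what permits each $F_k^{r,\sigma}$ to sit inside a single $\calF_{t_k+s}^+$.
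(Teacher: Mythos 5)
Your proposal is correct in essence and rests on the same two ideas as the paper's proof: split by the sign of $\delta_{j,k}:=\bbP[A_j^k\cap B_k]-\bbP[A_j^k]\bbP[B_k]$ so that, after unioning the $A_j^k$ over $j$ (using disjointness in $j$), the absolute values add rather than cancel, and then invoke Remark~2 using the uniform $t_k$ (minimal with $B_k\in\calF_{t_k}^-$) and disjointness of the $B_k$. Your Step~1 justifying the uniform $t_k$ is exactly the (implicit) observation in the paper.

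The layer-cake machinery in Steps~2--3 is, however, unnecessary overhead. The paper's version simply uses $f(j,k)\le S$ to pull $S$ out of the double sum \emph{before} summing, writes
\[
\sum_{(j,k)\in U^{\pm}} f(j,k)\,|\delta_{j,k}|\le S\sum_{k\ge 0}\Bigl|\bbP\Bigl[\bigcup_{j\in U_k^\pm}A_j^k\cap B_k\Bigr]-\bbP\Bigl[\bigcup_{j\in U_k^\pm}A_j^k\Bigr]\bbP[B_k]\Bigr|\le S\phi(s),
\]
and adds the two signs --- one application of Remark~2 per sign, no integration. Your levels $J_k^{r,\sigma}$ recover the same telescoping inside each $k$, but the $r$-integration then just reproduces the factor $S$ the hard way.

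One small caution: in Step~3 you assert that $\delta_{j,k}\ne 0$ forces $\bbP[A_j^k\cap B_k]>0$. That is automatic when $\delta_{j,k}>0$, but when $\delta_{j,k}<0$ it additionally requires that $\bbP[A_j^k]\bbP[B_k]=0$ whenever $\bbP[A_j^k\cap B_k]=0$. In the situations where this lemma is applied in the paper this holds (e.g.\ $\bbP[A_j^k]=0$ for all $j$ beyond a uniform bound), and the paper's treatment of the $U^-$ sum implicitly makes the same reading of $S$, so this is not a gap unique to your argument --- but it is worth keeping in mind when using the lemma.
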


\begin{proof}
Let 
\[
U^\pm:= \left\{(j,k)\in\bbN_0^2\,\big|\,  \pm (\bbP[A_j^k\cap B_k] - \bbP[A_j^k]\bbP[B_k])>0 \right\},
\]
and let $U_k^\pm:=\{j\in\bbN_0\,|\,(j,k)\in U^+\}$ for each $k\in\bbN_0$.
Then
\begin{align*}
 \sum_{(j,k)\in U^+}  f(j,k)   \left|\bbP[A_j^k\cap B_k] - \bbP[A_j^k]\bbP[B_k] \right| \le  S \sum_{k\ge 0}     \left(\bbP \left[ \bigcup_{j\in U_k^+} A_j^k\cap B_k \right] - \bbP \left[ \bigcup_{j\in U_k^+} A_j^k \right]\bbP[B_k] \right),
\end{align*}
which  $\le S\phi(s)$ because if $t_k\in\bbN_0$ is minimal such that $B_k\in \calF_{t_k}^-$, then  $\bigcup_{j\in U_k^+} A_j^k\in\calF_{t_k+s}^+$.  The same estimate holds for the sum over $U^-$, finishing the proof.
\end{proof}

\begin{theorem} \lb{T.2.2}
Assume the hypotheses of Theorem \ref{T.2.1}, but with
($2^{**}$) and ($5^{**}$) replaced by ($2^{*}$) and $\lim_{s\to\infty}\phi(s)=0$ for $\phi$ from Remark 2 after Theorem \ref{T.1.2} (with $s,t_0,t_1,\dots\in\bbN_0$).
%
%
Then \eqref{306} holds.
\end{theorem}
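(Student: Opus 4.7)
The plan is to follow the proof of Theorem~\ref{T.2.1}, substituting Lemma~\ref{L.3.0} for each invocation of the ratio-mixing hypothesis ($5^{**}$), and exploiting the uniform bound $X_{m,n}^t\le C(n-m)$, which follows from ($2^{*}$), (1), and (3).  Given $\eps\in(0,1]$, I would pick $C_\eps\in C\bbN$ with $\phi(C_\eps)\le\eps$ and set $T_{m,n}^t:=X_{m,n}^t+C_\eps$; as in the original proof this preserves (1), (3), ($6^{**}$), and gives ($4''$), while boundedness yields $C_\eps\le T_{m,n}^t\le 2C(n-m)$ once $n-m\ge C_\eps/C$.  Define $t_i^n,\xi_i^n,\mu_i^n$ exactly as in Theorem~\ref{T.2.1}.

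The core of the adaptation is to establish additive analogs of \eqref{304}, \eqref{307}, and \eqref{321}: there is $C_\#>0$ depending only on $C$ such that, for all relevant indices,
\[
|\mu_i^n-\mu_0^n|\le C_\# n\eps,\qquad \bbE\bigl[T_{m,n}^{T_{0,m}^0}\bigr]-\bbE\bigl[T_{0,n-m}^0\bigr]\le C_\#(n-m)\eps,
\]
and $|\Cov[\xi_k^n,\xi_l^n]|\le C_\# n^2\eps$ for $k\ne l$.  The first two I would derive by writing the double sums in \eqref{304} and \eqref{307} as products of marginals (these evaluate to $\mu_0^n$ and $\bbE[T_{0,n-m}^0]$, respectively, by (3)) plus a remainder bounded via Lemma~\ref{L.3.0} with $B_k=\{t_i^n=k\}$ (resp.\ $\{T_{0,m}^0=i\}$), $A_j^k=\{T_{in,(i+1)n}^k=j\}$ (resp.\ $\{T_{m,n}^i=j\}$), $f(j,k)=j$, and $s=C_\eps$: the separation hypothesis $\Psi\ge C_\eps$ is ($4''$), the disjointness is built into the values attained, and $S\le 2Cn$ from boundedness gives an error at most $4Cn\eps$.

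For the covariance I would expand $\bbE[\tilde\xi_k^n\tilde\xi_l^n]$ as a four-index sum and apply Lemma~\ref{L.3.0} with multi-index $(j',i,i')$ parametrizing $B_{(j',i,i')}:=\{t_l^n=j'\;\&\;T_{kn,(k+1)n}^{i'}=i\;\&\;t_k^n=i'\}$, $A_j^{(j',i,i')}:=\{T_{ln,(l+1)n}^{j'}=j\}$, and $f=(i-C_\eps)(j-C_\eps)\le (2Cn)^2$.  A short induction via ($4''$) shows $\{t_l^n=j'\}\in\calF_{j'-C_\eps}^-$, so the separation is $\ge C_\eps$; disjointness of the $B_{(j',i,i')}$ over distinct multi-indices, and of the $A_j^{(j',i,i')}$ in $j$, is immediate.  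Lemma~\ref{L.3.0}, combined with (3) to identify the inner $j$-sum with $\bbE[\tilde\xi_0^n]$, yields $|\bbE[\tilde\xi_k^n\tilde\xi_l^n]-\bbE[\tilde\xi_0^n]\bbE[\tilde\xi_k^n]|\le 8C^2n^2\eps$, and combining with the first additive estimate (to compare $\bbE[\tilde\xi_l^n]$ with $\bbE[\tilde\xi_0^n]$) gives the covariance bound.  The variance bound $\Var[\xi_i^n]\le (2Cn)^2$ is automatic from boundedness.

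With these additive analogs of \eqref{304}, \eqref{307}, and \eqref{3claim} in place, the remainder of the proof of Theorem~\ref{T.2.1} transfers with essentially no change: \eqref{305} follows by iterating the second estimate along $n=kn_\eps+l$; Chebyshev applied to $t_k^{n_\eps}/k$ using the variance and covariance bounds (with $n_\eps$ chosen as in \eqref{3.1}) yields the $\limsup$-in-probability bound \eqref{308}; and the matching lower bound in probability is obtained exactly as in the argument starting at \eqref{361}, with the variance bound handling the tail.  The main obstacle will be the multi-indexed bookkeeping in the covariance estimate: once the disjointness and separation hypotheses of Lemma~\ref{L.3.0} are unwound for the $(t,m)$-coupled variables $\xi_k^n$ and $\xi_l^n$, boundedness converts the mixing error into the required $O(n^2\eps)$ term automatically.
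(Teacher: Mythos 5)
Your proposal is correct and follows essentially the same route as the paper: replace the ratio-mixing hypothesis ($5^{**}$) by Lemma~\ref{L.3.0} at each of its three uses (the one-step expectation comparison \eqref{304}, the iterated comparison \eqref{307}, and the covariance bound \eqref{3claim}), and use the boundedness $X_{m,n}^t\le C(n-m)$ from ($2^{*}$) to convert multiplicative errors into additive $O(n\eps)$ or $O(n^2\eps)$ errors; the multi-index bookkeeping for the covariance ($B$'s indexed by $(j',i,i')$, $A$'s by $j$) is exactly what the paper does. The only cosmetic difference is that you absorb $C_\eps$ into the bound $2Cn$ valid for $n\ge C_\eps/C$, whereas the paper keeps $Cn+C_\eps$ throughout; since the variance/covariance bounds are only invoked for $n=n_\eps\to\infty$ (with $C_\eps/n_\eps\to 0$), this does not affect the argument.
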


\begin{proof}
%
%
This proof follows along the same lines as the one of Theorem \ref{T.2.1}, with some minor adjustments.
From (1), ($2^{*}$), and (3) we see that for any integers $t\geq 0$ and $n>m\geq0$ we have
\beq\lb{2**}
X_{m,n}^t\le C(n-m).
\eeq
With the $\phi$ considered here, let $C_\eps\in\bbN$ be such that
\beq\lb{301'}
\phi( C_{\eps} )\le \frac\eps 2,
\eeq
and let $T_{m,n}^t, \underline{X}, t_i^n,\xi_i^n,\mu_i^n$ be defined as before.
Then \eqref{2**}, Lemma \ref{L.3.0}, and (3) yield
\beq \lb{2.4'}
\begin{aligned}
   \mu_i^{n} &=\sum_{k,j\geq 0}
   j\,\bbP\left[T_{in,(i+1)n}^{k}=j \,\, \& \,\, t_i^{n}=k\right] \\
   &\leq \sum_{k, j\geq 0} j\,\bbP\left[T_{in,(i+1)n}^{k}=j \right]\bbP\left[ t_i^{n}=k\right]+(Cn+C_\eps)\eps  \\
   & = \sum_{k,j\geq 0} j\,\bbP\left[T_{0,n}^{0}=j\right]\bbP\left[ t_i^{n}=k\right] +(Cn+C_\eps)\eps\\
    &= \mu_0^{n}+(Cn+C_\eps)\eps
\end{aligned}
\eeq
instead of \eqref{304}.
Similarly, we obtain
\beq\lb{2.5'}
\bbE\left[\xi_i^n-C_\eps\right]\leq \mu_0^{n}-C_\eps+Cn\eps.
\eeq
and
\beq\lb{2.6'}
\bbE\left[\xi_i^n-C_\eps\right]\geq \mu_0^{n}-C_\eps-Cn\eps.
\eeq
Using (1) and \eqref{2.4'} in place of \eqref{304}, we now get
\beq\lb{2.7'}
\mu_0^n\leq C_\eps'n
\eeq
in place of \eqref{320}, with $C_\eps':=\bbE\left[X_{0,1}^0\right]+C_\eps+C\eps$.

Next, similarly to \eqref{2.4'} and using Lemma \ref{L.3.0} and \eqref{2**}, we can replace \eqref{307} by
\beq\lb{2.9'}
\begin{aligned}
\bbE\left[T_{m,n}^{T_{0,m}^{0}}\right] &=\sum_{i,j\geq 0}
j\,\bbP\left[T_{m,n}^{i}=j \,\, \& \,\,  T_{0,m}^{0}=i\right]\\
&\leq \sum_{i,j\geq 0} j\,\bbP\left[T_{m,n}^{i}=j\right]\bbP\left[ T_{0,m}^{0}=i\right]
+ (C(n-m)+C_\eps)\eps \\
&= \bbE\left[ T_{0,n-m}^{0}\right]+ (C(n-m)+C_\eps)\eps .
\end{aligned}
\eeq
With this, we again obtain \eqref{305}.

The proof of \eqref{3claim} is also adjusted similarly to \eqref{2.4'}.  We now obtain
\[
\bbE\left[\left(\tilde{\xi}_i^{n}\right)^2\right]\leq \bbE\left[\left(T_{0,n}^{0}-C_\eps\right)^2\right]+(Cn)^2\eps=\bbE\left[\left(X_{0,n}^{0}\right)^2\right]+(Cn)^2\eps
\]
and
\[
\bbE\left[(\xi_i^1)^2\right]\leq \bbE\left[(T_{0,1}^0)^2\right]+(C+C_\eps)^2\eps,
\]
which yields the first claim in \eqref{3claim} as before (with a different $C_*$).  In the proof of the second claim, we use \eqref{2.5'} in place of \eqref{309}, as well as $\tilde{\xi}_k^n\leq Cn$ (due to \eqref{2**}).  We also use the same adjustment as in \eqref{2.4'}, but now replacing the sum over $k$ by the sum over $(i,i',j')$ (with $A_j^{(i,i',j')}:=\{T_{ln,(l+1)n}^{j'}=j\}$ when we use Lemma \ref{L.3.0}).  This and \eqref{2**} show that 
\begin{align*}
    \bbE & \left[\tilde{\xi}_k^{n}\tilde{\xi}_l^{n}\right]=\sum_{i,j,i',j'\geq 0}(i-C_{\eps})(j-C_{\eps})\,   
    \bbP\left[ T_{ln,(l+1)n}^{j'}=j \,\, \& \,\, t_l^{n}=j' \,\, \& \,\, T_{kn,(k+1)n}^{i'}=i \,\, \& \,\, t_k^{n}=i'  \right]\\
    & \leq  \sum_{i,j,i',j'\geq 0}(i-C_{\eps})(j-C_{\eps})\,
    \bbP\left[T_{ln,(l+1)n}^{j'}=j\right] \bbP\left[ t_l^{n}=j' \,\, \& \,\, T_{kn,(k+1)n}^{i'}=i \,\, \& \,\, t_k^{n}=i' \right] 
    +(Cn)^2\eps\\
    & =  \sum_{i,j,i'\ge 0}(i-C_{\eps})(j-C_{\eps})\,
    \bbP\left[T_{0,n}^{0}=j\right]  \bbP\left[T_{kn,(k+1)n}^{i'}=i  \,\, \& \,\, t_k^{n}=i'\right] 
	+(Cn)^2\eps\\
    &  =  \bbE\left[\tilde{\xi}^{n}_0\right] \bbE\left[\tilde{\xi}^{n}_k\right] +(Cn)^2\eps 
 \leq  \bbE\left[\tilde{\xi}^{n}_0\right]^2 + Cn\eps \bbE\left[\tilde{\xi}^{n}_0\right] +(Cn)^2\eps.
\end{align*}
This, \eqref{2.6'} applied with $i=k,l$, and $\tilde{\xi}^{n}_0\le Cn$ then yield the second claim in \eqref{3claim} with $C_*:=4C^2$.

Now, the proof of \eqref{3.0}, but with  \eqref{304}, \eqref{320}, and \eqref{307} replaced by \eqref{2.4'}, \eqref{2.7'}, and \eqref{2.9'}, shows that
\[
\bbP\left[\frac{X_{0,N}^{0}}{N}>\frac{\mu_0^{n}}{n}+C\eps+\frac{(1+\eps) C_\eps }{n}+\frac{\tau^{n}_N}{kn}\right]\leq 
\frac{1+\eps k}{C_\eps^2 k} C_* n^2,
\]
where $\tau^{n}_N:=T_{kn,kn+l}^{T_{0,kn}^{0}}$ satisfies $\bbE[\tau^{n}_N]\leq (C'_{\eps} +(C+C_\eps)\eps)n$.
This then implies \eqref{308} as before, and the rest of the proof is identical to the proof of Theorem \ref{T.2.1}. 
\end{proof}

We can now prove  the first claim in Theorem \ref{T.1.2} similarly to the proof of Theorem \ref{T.1.1}.

\begin{proof}[Proof of the first claim in Theorem \ref{T.1.2}]
Let us first assume that $c\geq 1$.  Let
\[
T_{m,n}^t:= \lceil X_{m,n}^t +C \rceil\quad (\in\bbN_0)
\]
and restrict $t$ to $\bbN_0$.  
Similarly to the proof of Theorem \ref{T.1.1}, we find that $T_{m,n}^t$ satisfies hypotheses (1), (3), ($4^{**}$), ($6^{**}$) of Theorem~\ref{T.2.2} (with $X_{m,n}^t$ replaced by $T_{m,n}^t$), but with $\max\{\lceil C\rceil,1\} $ in place of $C$ in ($6^{**}$).  Hence iteration of ($6^{**}$) shows that it also holds for $T_{m,n}^t$ and $C':=2 \max\{\lceil C\rceil,1\}$ in place of $C$.  From ($2^{*}$) for $X_{m,n}^t$ we see that $T_{m,n}^t$ also satisfies ($2^{*}$) with $C'$ in place of $C$.

Let now $\phi$ be as in Remark 2 after Theorem \ref{T.1.2}.  Note that if we define $\tilde \phi(s)$ as in that remark but only with $s,t_0,t_1,\dots\in\bbN_0$, then $\tilde \phi\le \phi$.  Therefore our hypothesis \hbox{$\lim_{s\to\infty}\phi(s)=0$} implies the last hypothesis in Theorem~\ref{T.2.2} as well.  That theorem for $T_{m,n}^t$ now  yields \eqref{306}.

For $c\in(0,1)$, we let $\calG_t^\pm$ and $Y_{m,n}^t$ be as in the proof of Theorem \ref{T.1.1}.  The above argument with $(\calG_t^\pm,Y_{m,n}^t,SC, Sc)$ in place of $(\calF_t^\pm,X_{m,n}^t,C,c)$ then again concludes \eqref{306}. 

Finally, in the setting of the last claim in Theorem \ref{T.1.2} we can just apply Theorem \ref{T.2.2} directly to $X_{m,n}^t$ 
(with $\tilde \phi$ above).
\end{proof}


\section{Time-Decaying Dependence II}  \lb{S3'}

In this section we will prove the second claim in Theorem \ref{T.1.2}, as well as the corresponding integer-valued claim.


\begin{proof}[Proof of the second claim in Theorem \ref{T.1.2}]
Similarly to the proof of the first claim in Theorem \ref{T.1.2}, this again follows from the corresponding integer-valued claim.  Hence, without loss, we can restrict $t$ to $\bbN_0$ and assume that $X_{m,n}^t$ only takes values in $\bbN_0$.

The first claim in Theorem \ref{T.1.2} yields 
\beq\lb{6.2}
\lim_{n\to\infty}\frac{X_{0,n}^0}{n} =\lim_{n\to \infty}\frac{\bbE\left[X_{0,n}^{0}\right]}{n}=:\underline{X}\ge 0\qquad\text{ in probability}.
\eeq
Let us now prove
\beq\lb{9.8}
 \limsup_{n\to\infty}\frac{X_{0,n}^0}{n} \leq\underline{X} \qquad\text{ almost surely.}
\eeq

As in the proof of Theorem \ref{T.2.1}, let $T_{m,n}^t:=X_{m,n}^t+C_\eps$ some $C_\eps\in\bbN$ that is a multiple of $C$ and \eqref{301'} also holds.  Then (1'), (3'), (6') from Theorem \ref{T.2.5} hold and so does (4'') from the proof of Theorem \ref{T.2.1}, while (2') is replaced by $T_{0,1}^0\leq C+C_\eps$, and ($5^*$) also holds.

For any $n\in\bbN$,  define  $t_i^n$ and $\xi_i^n$ as at the start of the proof of Theorem \ref{T.2.1}.
From ($4$'') we again get \eqref{9.1} for any $i,i',j,j',k,l\in \bbN_0$, and the argument after \eqref{9.1} again shows that if
$l>k$, then $j'\geq i+i'$ whenever $ \bbP [T_{kn,(k+1)n}^{i'}=i \,\, \& \,\, t_k^{n}=i'\,\, \& \,\, t_l^{n}=j']>0$.
Then the argument from the proof of the second claim in \eqref{3claim} in the proof of Theorem \ref{T.2.2} (which uses Lemma~\ref{L.3.0}) shows
that for any $\nu,\nu'\in\bbN$ we have
\begin{align}
    \bbP & \left[{\xi}_k^{n}\ge \nu \,\,\&\,\,{\xi}_l^{n}\ge\nu'\right]=\sum_{i-\nu,j-\nu',i',j'\geq 0}
    \bbP\left[ T_{ln,(l+1)n}^{j'}=j \,\, \& \,\, t_l^{n}=j'  \,\, \& \,\,  T_{kn,(k+1)n}^{i'}=i \,\, \& \,\, t_k^{n}=i' \right] \notag \\
    & \leq  \sum_{i-\nu,j-\nu',i',j'\geq 0}\,
    \bbP\left[T_{ln,(l+1)n}^{j'}=j\right] \bbP\left[ t_l^{n}=j' \,\, \& \,\,  T_{kn,(k+1)n}^{i'}=i \,\, \& \,\, t_k^{n}=i' \right] +\eps 
    \notag \\
    & =  \sum_{i-\nu,j-\nu',i'\ge 0}\,
    \bbP\left[T_{0,n}^{0}=j\right]  \bbP\left[T_{kn,(k+1)n}^{i'}=i  \,\, \& \,\, t_k^{n}=i'\right] +\eps  \lb{6.8}  \\
   &  \leq   \bbP\left[T_{0,n}^{0}\geq \nu'\right] \sum_{i-\nu,i'\ge 0} \bbP\left[T_{kn,(k+1)n}^{i'}=i \right]\bbP\left[ t_k^{n}=i'\right]+2\eps \notag \\
   &=  \bbP\left[T_{0,n}^{0}\geq \nu'\right] \bbP[T_{0,n}^{0}\geq \nu]+2\eps. \notag 
\end{align}


Now  fix some $K\in\bbN$.
From (1') we see that for any $n\in\bbN$ we  have
\beq\lb{6.5}
 T_{0,Kn}^0 \leq \sum_{i=0}^{K-1} T_{in,(i+1)n}^{t^{n}_{i} }.
\eeq
From \eqref{6.2} we see that there is $\eps$-independent $n_{K}\in\bbN$  such that for all $n\geq \max\{C_\eps K,n_{K}\}$,
\beq\lb{6.7}
\bbP\left[\frac{T_{0,n}^0}{n}-\underline{X}\geq \frac{2}{K}\right]\leq \bbP\left[\frac{X_{0,n}^0}{n}-\underline{X}\geq \frac{1}{K}\right]\leq \frac{1}{2K^2}.
\eeq
From (1), ($2^*$), and (3) we get $T_{in,(i+1)n}^{t^{n}_{i}} \leq C_\eps+Cn\leq (C+1)n$ for these $n$ and all $i\in\bbN_0$. 
This means that if only one of the numbers
\[
g_i^n:=\frac{T_{in,(i+1)n}^{t^{n}_{i} }}n -\underline{X}- \frac 2K \qquad (i\in \{0,\ldots,K-1\})
\]
is positive, then \eqref{6.5} yields
\[
\frac{T_{0,Kn}^0}{Kn}-\underline{X}\le \sum_{i=0}^{K-1} \left( \frac{g_i^n}K+\frac 2{K^2} \right) < \frac{C+1}K + \frac 2K =  \frac{C+3}{K}.
\]
The same estimate holds if each of these numbers is less than $\frac{C+1}{K}$. These facts, \eqref{6.8}, (3'), and \eqref{6.7} now imply that for any $n\geq \max\{C_\eps K,n_{K}\}$ we have
\begin{align*}
\bbP\left[\frac{T_{0,Kn}^0}{Kn}-\underline{X}\geq \frac{C+3}{K}\right]
& \leq \sum_{0\leq i,j <K \,\&\,i\neq j}\bbP\left[g_i^n\geq \frac{C+1}{K} \,\,\&\,\, g_j^n\geq 0\right]\\
& \leq K^2 \bbP\left[g_0^n\geq \frac{C+1}{K}\right]\bbP\left[g_0^n\geq 0\right]+2K^2\eps 
\\ &\leq \frac12 \bbP\left[\frac{T^0_{0,n}}{n}-\underline{X}\geq \frac{C+3}{K}\right]+2K^2\eps.
\end{align*}

We can now apply this estimate iteratively with $Kn, K^2n, \dots$ in place of $n$ and obtain for any $n\geq \max\{C_\eps K,n_{K}\}$  and $q\in\bbN$,
 \begin{align*}
\bbP\left[\frac{T_{0,K^q n}^0}{K^qn}-\underline{X}\geq \frac{C+3}{K}\right]\leq 2^{-q}\,\bbP\left[\frac{T^0_{0,n}}{n}-\underline{X}\geq \frac{C+3}{K}\right]+4K^2\eps\leq 2^{-q}+4K^2\eps.
 \end{align*}
 This of course also yields
 \beq\lb{9.2}
\bbP\left[\frac{X_{0,K^q n}^0}{K^qn}-\underline{X}\geq \frac{C+3}{K}\right]\leq 2^{-q}+4K^2\eps.
\eeq
%
%
The hypothesis shows that there is $A\in\bbN$ such that $C_\eps \leq A\eps^{-A}$ for all $\eps\in(0,1)$.  Let
\[
 C'\ge C_K:=AKn_K\qquad\text{and}\qquad  M_K:=2^AK,
\]
with $C'\in\bbN$.  Then for any  $q\in\bbN$, \eqref{9.2} with $\eps:=2^{-q}$  and $n:=2^{Aq} C'$ ($\ge \max\{C_\eps K,n_{K}\}$) yields 
\beq
\bbP\left[\frac{X_{0,C' M_K^q }^0}{C' M_K^q }-\underline{X}\geq \frac{C+3}{K}\right]\leq 5K^2 2^{-q}.
\eeq 
By the Borel-Cantelli Lemma we then obtain
 \beq\lb{6.10}
 \limsup_{q\to \infty} \frac{X_{0,C' M_K^q }^0}{C' M_K^q}\leq \underline{X}+\frac{C+3}K\qquad\text{ almost surely}.
 \eeq
 
 Now 
 apply \eqref{6.10} with $C'$ taking all the values in 
 \[
 U_K:= \left\{C_K,C_K+1,\dots,C_KM_K \right\}.
 \]
Then for any large $n$, there is $(C',q)\in U_K \times \bbN$ such that $C'M_K^q\le n\le C'M_K^q+C_K^{-1}n$ 
 and
 \[
 \frac{X_{0,C' M_K^q }^0}{C' M_K^q}\leq \underline{X}+\frac{C+4}K.
 \]
So by (1) and ($2^*$) we have
$X_{0,n}^0\leq X_{0,C' M_K^q }^0+CC_K^{-1} n$,
which yields
\[
 \limsup_{n\to \infty} \frac{X_{0,n }^0}{n}\leq \underline{X}+\frac{C+4}K+ CC_K^{-1} \qquad\text{ almost surely}.
\]
By taking 
 $K\to\infty$, we conclude  \eqref{9.8}.

It remains to prove 
 \beq \lb{9.3}
 \underline{X} \le \liminf_{n\to\infty}\frac{X_{0,n}^0}{n}  \qquad\text{ almost surely.}
\eeq
We will do this with only assuming $\lim_{s\to\infty}  \phi(s)=0$ (rather than $\lim_{s\to\infty} s^\alpha \phi(s)=0$), and without the use of the proof of \eqref{9.8}.  This will then also prove Remark 3 after Theorem \ref{T.1.2}.

For any $t,m,n,j\in\bbN_0$ with $j\ge n$, let
\[
Y_{m;n,j}^t:=\min_{n\le i \le j } \frac{X_{m,m+i}^t}{i}\qquad\text{and}\qquad  Z_m^t:=\lim_{n\to\infty}\lim_{j\to\infty}Y_{m;n,j}^t=\liminf_{n\to\infty} \frac{X_{m,m+n}^t} n.
\]
$Z_m^{t+Ck}$ is non-decreasing in $k\in\bbN$ by (6) (with $c=0$), and since the law of $Z_m^t$ is independent of $(t,m)$ by (3), we almost surely have $Z_m^{t+Ck}=Z_m^t$ for all $k\in\bbN$. Moreover we claim that  $Z_0^0$ is almost everywhere  constant (which  implies that $Z_m^t$ is a.e.~equal to the same constant for each $(t,m)\in\bbN_0$).

If this is not the case, let $c:=\Var[Z_0^0]>0$. 
From (1), ($2^*$), and (3) we have
\beq\lb{6.11}
\max \left\{ Z_0^0,\,Y_{0;n,j}^t,\,\frac{X_{0,n}^t}{n} \right\} \leq C\qquad\text{ for all $t,n,j\in\bbN_0$ with $j\ge n\geq1$}.
\eeq
Let  $\delta:=\frac{c}{4C(C+1)}$. By (3) and Ergorov's Theorem, there $\delta$-dependent  $n,j\in\bbN^2$ with $j\ge n$  such that for any $t\in\bbN_0$ we have
\[
|Y_{0;n,j}^t-Z_0^t|\leq \delta\qquad\text{ on some }\Omega^t_{\delta}\subseteq\Omega \text{ with }\bbP[\Omega^t_{\delta}]\geq 1-\delta.
\]
Also since $Z_0^0=Z_0^{Ck}$ a.e. for all $k\in\bbN$, \eqref{6.11} and $\Var[Z_0^0]= c$ imply
\beq\lb{6.12}
\Cov\left[Y_{0;n,j}^0,Y_{0;n,j}^{Ck}\right]\geq \Cov\left[Z_0^0,Y_{0;n,j}^{Ck}\right]-(C+C^2)\delta\geq \Cov\left[Z_0^0,Z_0^{Ck}\right]-2(C+C^2)\delta\geq \frac{c}2.
\eeq

Next note that by (4), \eqref{6.11}, and the definition of $Y_{0;n,j}^0$ we have
\[
Y^{0}_{0;n,j}\text{ is }\calF^-_{Cj}\text{-measurable\qquad and\qquad }Y^{Ck}_{0;n,j}\text{ is } \calF^+_{Ck}\text{-measurable}. 
\]
This, Lemma \ref{L.3.0}, \eqref{6.11}, and $Y_{0;n,j}^t$  only taking rational values
show for any $k\ge j\ge n\ge 1$,
\beq\lb{6.18}
\begin{aligned}
  \bbE\left[Y_{0;n,j}^0Y_{0;n,j}^{Ck}\right] & =\sum_{p,q\in\bbQ} pq\,   
    \bbP\left[ Y_{0;n,j}^{Ck}=p \,\, \& \,\,  Y_{0;n,j}^{0}=q \right]\\ &
    \leq \sum_{p,q\in\bbQ} pq\,   
    \bbP\left[ Y_{0;n,j}^{Ck}=p\right]\bbP\left[ Y_{0;n,j}^{0}=q \right]+C^2\phi(C(k-j))\\
    & = \bbE\left[Y_{0;n,j}^0\right]\bbE\left[ Y_{0;n,j}^{Ck}\right]+C^2\phi(C(k-j)),
\end{aligned}
\eeq
Hence $\Cov\left[Y_{0;n,j}^0,Y_{0;n,j}^{Ck}\right]\leq C^2\phi(C(k-j))$, which contradicts with \eqref{6.12} if we take $k$ large enough (because   ($5^{*}$) holds). 

Therefore $Z_0^0$ is indeed almost everywhere equal to some constant $Q\in[0,\underline{X}]$.
Then \eqref{9.3} is just $\underline{X}\leq Q$, so we only need to prove this. 
For any $\eps>0$ and $K\in\bbN$, let us  define 
\beq\lb{9.5}
T_{m,n}^t:=X_{Km,Kn}^t+C_\eps
\eeq
(which depends on $\eps,K$ but we suppress this in the notation).
Then again  (1'), (3'), (6') from Theorem \ref{T.2.5} hold and so does (4'') from the proof of Theorem \ref{T.2.1}, while (2') is replaced by $T_{0,1}^0\leq CK+C_\eps$, and ($5^*$) also holds.
From (1'), (3'), and $T_{0,1}^0\leq CK+C_\eps$ we obtain for any $t,m,n\in\bbN_0$,
\beq\lb{6.19}
T_{m,m+n}^t\leq CKn+C_\eps.
\eeq
Note that to prove $\underline{X}\leq Q$, it suffices to show that
\beq\lb{6.21}
\frac{\bbE[T_{0,n}^0]}{n}\leq KQ+KC\eps+ \eps(C_\eps+2)+\frac{M_{K,\eps}'}{n}
\eeq
holds for each $\eps>0$ and $K\in\bbN$, with some $n$-independent $M_{K,\eps}'$. 
This is because after dividing  \eqref{6.21} by $K$ and taking $n\to\infty$, 
we obtain from  \eqref{6.2},
\[
\underline{X} = \lim_{n\to\infty} \frac{\bbE[X_{0,Kn}^0]}{Kn} \le Q+C\eps+ \frac{\eps(C_\eps+2)}K.
\]
Taking $K\to\infty$ and then $\eps\to 0$ now  yields $\underline{X}\le Q$, so we are indeed left with proving \eqref{6.21}.

This is done similarly to the argument in the proof of
\eqref{110},  with $KQ$  in place of $Q$. Fix $\eps>0$ and $K\in\bbN$, let $Q_\eps:=KQ+\eps$ (as at the start of that proof), and let $T_{m,n}^t$ be from \eqref{9.5}.  Note that for any $t,m\in\bbN_0$ we have $\liminf_{n\to\infty}\frac{T^t_{m,m+n}}{n}=KQ$ almost surely because  $Z_m^t=Q$ almost everywhere.   Define 
\[
N_m^t, M_\eps,t_k,r_k,S_n
\]
as in the proof of  \eqref{110}, and follow that proof, with two adjustments  near the end where  (5') was used. The first is the estimate on
\[
\bbE \left[ T_{r_{k} ,r_{k} +1}^{t_{k} } 1_{\{N^{t_{k} }_{r_{k} } >M_\eps\}}\right].
\]
From (4'') we have for any $i,j\in\bbN_0$ that $\{r_{k}=i \,\,\&\,\, t_k=j\}\in \calF^{-}_{j-C_\eps}$, and  $T_{i,i+1}^j$ and $N^{j }_{i}$ are $\calF_j^+$-measurable. Hence we can use ($5^{*}$), \eqref{6.19}, and Lemma \ref{L.3.0} instead of (5')  (as well as (3') and \eqref{112} as before) to obtain
\beq\lb{6.15}
\begin{aligned}
   \bbE &\left[ T_{r_{k} ,r_{k} +1}^{t_{k} } 1_{\left\{N^{t_{k} }_{r_{k} } >M_\eps\right\}}\right]
   =\sum_{i,j \ge 0}  \bbE\left[T_{i,i+1}^{j} 1_{\left\{N^{j }_{i} >M_\eps\right\}} 1_{\left\{r_{k}=i \,\, \& \,\, t_{k}=j \right\} }\right]
   \\
&\leq\,\sum_{i,j,l \ge 0} l\,\bbP\left[ T_{i,i+1}^{j}=l\,\,\&\,\, N^{j}_{i}>M_\eps\right]\bbP\left[ r_{k} =i \,\, \& \,\, t_{k} =j\right]+(CK+C_\eps)\eps\\
&=\,\bbE\left[ T_{0,1}^0 1_{\left\{N^{0}_{0}>M_\eps\right\}}\right]+(CK+C_\eps)\eps\leq (CK+C_\eps+1)\eps.
\end{aligned}
\eeq
This then yields
\beq\lb{6.20}
\frac{\bbE[S_n]}{n}\leq KQ+(CK+C_\eps+2)\eps
\eeq
in place of \eqref{113}. The second place 
is the estimate on $\bbE [ T_{n-l ,n}^{t_{K_n }} ]$, but here we can simply use \eqref{6.19} to obtain
\[
\bbE [ T_{n-l ,n}^{t_{K_n }} ] \le CKl+C_\eps.
\]
This and \eqref{300} yield
\beq\lb{6.22}
\bbE[T^{t_{Kn}}_{r_{K_n},n}]\leq \sum_{l=1}^{M_\eps}\bbE \left[ T_{n-l ,n}^{t_{K_n }} \right]\leq M_\eps(CKM_\eps+C_\eps)=:M_{K,\eps}'.
\eeq
This, \eqref{6.20}, and \eqref{114} now show \eqref{6.21}, and the proof is finished.
\end{proof}

\section{PDE and First Passage Percolation in Time-Dependent Environments} \lb{S4}

Our main motivation for this work was its application in the proofs of homogenization for reaction-diffusion equations and $G$-equations with time-dependent coefficients \cite{ZhaZla5}.  However, our results  can be used to study propagation of solutions to even more general PDE.

\begin{example} \lb{E.4.3}
Consider some PDE on $[0,\infty)\times \bbR^d$ with space-time stationary coefficients, for which the maximum principle holds.  Assume that  (5) resp.~($5^*$) holds when $\calF_t^\pm$ are  $\sigma$-algebras generated by the coefficients restricted to  $[0,t]\times \bbR^d$ and $[t,\infty)\times \bbR^d$, respectively.  Fix some compactly supported ``bump'' function $u_0:\bbR^d\to[0,\infty)$, and for any $(t',x')\in\bbR^d$ let $u^{t',x'}$ solve the PDE with initial value $u^{t',x'}(t',\cdot):=u_0(\cdot-x')$.  Then for any $y\in\bbR^d$ let
\[
X^{t'}(x',y):=\inf\left\{ t\ge 0 \,\Big|\, u^{t',x'}(t+t',\cdot)\ge u_0(\cdot -y)  \right\},
\]
so that $X^{t'}(x',y)$ can be thought of as the time it takes for $u^{t',x'}$ to propagate from $x'$ to $y$, starting at time $t'$.  Let us also assume that $u_0$ was chosen so that for some $C\ge 0$ and all $t'\ge C$ we have
$u^{0,0}(t',\cdot)\ge u_0$.

Fix any $t\in[0,\infty)$ and  unit vector $e\in\bbS^{d-1}$, and let  $X_{m,n}^{t,e}:=X^{t}(me,ne)$.  Then (4) is obvious from the definition of $X_{m,n}^{t,e}$, while maximum principle, space-time stationarity of coefficients, and $u^{0,0}(t',\cdot)\ge u_0$ for all $t'\ge C$ yield (1), (3), and (6).  
Hence if (2) resp.~($2^*$) holds, Theorem \ref{T.1.1} resp.~\ref{T.1.2} can be used to show that  the  limit
\beq\lb{5.4}
\lim_{n\to\infty} \frac{X_{0,n}^{0,e} }n
\eeq
exists and equals a constant (almost surely or in probability).  Of course, its reciprocal then represents the deterministic asymptotic speed of propagation in direction $e$ for this PDE.

In fact, if $\frac{X^{t'}(x',y)}{|x'-y|}$ is bounded below and above by positive constants $c_0\le c_1$ whenever $|x'-y|\ge 1$, then (2) and ($2^*$) clearly hold, asymptotic propagation speeds in all directions are between $\frac1{c_1}$ and $\frac1{c_0}$, and the PDE even has a deterministic asymptotic shape of propagation (called {\it Wulff shape}).  Indeed, a version of a standard argument going back to \cite{Ric, CoxDur} (see \cite{ZhaZla5}) can typically be used to show that there is a convex open set $S\subseteq\bbR^d$, containing  and contained in the balls centered at the origin with radii $\frac1{c_1}$ and $\frac1{c_0}$, respectively, such that if $S_t(\omega):=\{ x \in\bbR^d \,|\, X^0(0,x)\le t\}$, then for any $\delta>0$ we have
\[
(1-\delta)tS \subseteq S_t(\omega) \subseteq (1+\delta)tS,
\]
either for almost every $\omega\in\Omega$ and all large-enough $t\ge 0$ (depending on $\omega$ and $\delta$) or with probability converging to 1 as $t\to\infty$.

We refer the reader to our companion paper \cite{ZhaZla5} for further details and specific applications of Theorems \ref{T.1.1} and \ref{T.1.2} to homogenization for reaction-diffusion and Hamilton-Jacobi PDE.
\end{example}

We next provide an application of our results
to a different model, first passage percolations in time-dependent environments.  Let $V_d$ be the set of edges of the lattice $\bbZ^d$, that is, each $v\in V_d$ connects two points $A,B\in\bbZ^d$ which share $d-1$ of their $d$ coordinates and differ by 1 in the last coordinate (these can be either directed edges or not).  Let us consider a traveler moving on the lattice $\bbZ^d$ from point $A$ to $B$.  
He can move along any path $\gamma$ made of a sequence of edges $v_1^\gamma, v_2^\gamma,\dots,v_{n_\gamma}^\gamma$, where each $v_i^\gamma$ connects some points $A_{i-1}$ and $A_{i}$, with $A=A_0$ and $B=A_{n_\gamma}$.  Let us denote by $\Gamma_{A,B}$ the set of all such paths.
Let us assume that the travel time for any edge $v$, if it is reached by the traveler at time $t$, is some number $\tau^t_v\ge 0$.  For any $\gamma\in \Gamma_{A,B}$ and any time $t_0$, define recursively (for $i=1,2,\dots,n_\gamma$) the times
\[
t_{i}:=t_{i-1}+ \tau^{t_{i-1}}_{v_{i}^\gamma} \qquad\text{and}\qquad T^{t_0}_\gamma:= t_{n_\gamma}-t_0.
\]
That is, $t_i$ is the time of arrival at the point $A_i$, and $T^{t_0}_\gamma$ is the travel time along $\gamma$ when the starting time is $t_0$.  Finally, let 
\beq\lb{5.1}
X^{t}(A,B):=\inf\left\{ T^t_\gamma\,\big|\, \gamma \in\Gamma_{A,B} \right\}
\eeq
be the shortest travel time from $A$ to $B$ when starting at time $t$.

When the travel times are independent of $t$, this is of course the standard first passage percolation model.  Let us consider one of the following two setups when time-dependence is included.  Let $\xi_v^t\ge 0$ be some number, and let $\tau_v^t$ be either the first time  such that 
\beq\lb{5.2}
\int_0^{\tau_v^t} \xi_v^{t+s} ds=1,
\eeq
or let 
\beq\lb{5.3}
\tau_v^t:= \inf \left\{ s+ \left( \xi_v^{t+s} \right)^{-1} \,\Big|\, s\ge 0\right\}.
\eeq
In the first case, one can think of $\xi_v^{t+s}$ as the instantaneous travel speed along $v$ at time $t+s$, which changes due to changing road conditions (so $\int_0^{\tau} \xi_v^{t+s} ds$ is distance traveled in time $\tau$).  In the second case, one can think of $\xi_v^{t+s}$ as the speed of a train leaving one end of $v$ at time $t+s$ (which could be zero if there is no such train), and the traveller chooses the one that brings him to the other end at the earliest time.  

Now for any $e\in\bbZ^d$ we can define $X_{m,n}^{t,e}:=X^t(me,ne)$, so that asymptotic speed of travel in direction $e$  is $|e|$ divided by the reciprocal of \eqref{5.4},
provided this deterministic limit exists.  Theorems \ref{T.1.1} and \ref{T.1.2} can again be used to show this, either almost surely or in probability, if the speeds $\xi_v^t$ are random variables satisfying appropriate hypotheses.  

Note that hypotheses (1) and (6) in these theorems will always be satisfied (the latter with $c:=\infty$ and any $C\ge 0$) for both models \eqref{5.2} and \eqref{5.3}.  If there is $L<\infty$ such that  for all $(t,v)\in[0,\infty)\times V_d$ we have $\int_t^{t+L}\xi_v^sds\ge 1$ or $\sup\{\xi_v^s\,|\, s\in[t,t+L]\} \ge \frac 1L$ when we define $\tau_v^t$ via \eqref{5.2} or via \eqref{5.3}, respectively, this will also  guarantee ($2^{*}$) (and so (2) as well).  Finally, we will let $\calF_t^-$ be the $\sigma$-algebra generated by the family of random variables
\beq\lb{5.5}
\{\xi_v^s \,|\, s\in[0,t] \,\&\, v\in V_d\},
\eeq
and $\calF_t^+$ the $\sigma$-algebra generated by the family of random variables
\beq\lb{5.6}
\{\xi_v^s \,|\, s\ge t \,\&\, v\in V_d\},
\eeq
which will  guarantee (4).

We note that (3) follows from space-time stationarity of $\xi_v^t$.  For any $y\in\bbZ^d$, the translation $\sigma_y(x):=x+y$ on $\bbZ^d$ induces a translation map on $V_d$, which we also call $\sigma_y$.
If $(\Omega,\calF,\bbP)$ is the involved probability space, then the speeds $\xi_v^t$ are space-time stationary provided there is a semigroup of measure-preserving bijections $\{{\Upsilon_{(s,y)}:\Omega\to\Omega}\}_{(s,y)\in[0,\infty)\times\bbZ^d}$ such that $\Upsilon_{(0,0)}={\rm Id}_\Omega$, for any $(s,y),(r,z)\in [0,\infty)\times\bbZ^d$ we have
\[
\Upsilon_{(s,y)}\circ\Upsilon_{(r,z)}=\Upsilon_{(s+r,y+z)},
\] 
and for any $(t,s,v,y,\omega)\in [0,\infty)^2\times V_d\times \bbZ^{d}\times\Omega$ we have
\[
\xi_v^t ( \Upsilon_{(s,y)} \omega)= \xi_{\sigma_y(v)}^{t+s} (\omega).
\]
Hence if the speeds $\xi_v^t$ are also space-time stationary, we will only need to check  (5) or ($5^{*}$).


We can construct space-time stationary environments with appropriately time-decreasing correlations by sampling space-stationary environments.  Let $(\Omega_0,\calF_0,\bbP_0)$ be a probability space and let $\sigma_y$ be as the above.
We say that a random field $\eta:V_d\times\Omega_0\to\bbR$ is space stationary, if there is a semigroup of measure-preserving bijections $\{{\Upsilon_{y}:\Omega_0\to\Omega_0}\}_{y\in \bbZ^d}$ such that $\Upsilon_{0}={\rm Id}_{\Omega_0}$, for any $y,z\in \bbZ^d$ we have $\Upsilon_{y}\circ\Upsilon_{z}=\Upsilon_{y+z}$,
and for any $(v,y,\omega)\in  V_d\times \bbZ^{d}\times\Omega_0$ we have
\[
\eta (v, \Upsilon_{y} \omega)= \eta (\sigma_y(v),\omega).
\]
Let us assume below that $\eta$ satisfies this as well as $\frac 1L\le \eta \le L$ for some $L\ge 1$.

\begin{example} \lb{E.5.1}
Let $\Omega:=[0,C)\times \Omega_0^{\bbN_0}$ have the product probability measure (with some $C>0$ and the uniform measure on $[0,C)$), and for $\omega=(a,\omega_0,\omega_1,\dots)\in\Omega$ let 
\beq\lb{5.7}
\xi_v^t(\omega):=\eta(v,\omega_{\lfloor (t+a)/C\rfloor}).
\eeq
That is, the speeds $\xi_v^t$ always change after time interval $C$, starting from some time in $[0,C)$.
Then they  are clearly space-time stationary. Moreover,  if $\calF_t^\pm$ are defined via \eqref{5.5} and \eqref{5.6}, then $\calF_t^-$ and $\calF_{t+C}^+$ are independent for each $t\ge 0$ because random variables $\alpha(\omega):=\eta(v_1,\omega_i)$ and $\beta(\omega):=\eta(v_2,\omega_j)$ are independent for any $v_1,v_2\in V_d$ and any distinct $i,j\in\bbN_0$.  The above discussion now shows that Theorem \ref{T.1.1} applies to $X_{m,n}^{t,e}$ above for any $e\in\bbZ^d$, so $\frac 1n X_{0,n}^{t,e}$ converges to some $\omega$-independent constant almost surely.  

Moreover, for any $(A,B,t)\in\bbZ^{2d}\times[0,\infty)$  (and with $L$ above) we clearly have
\beq\lb{5.8}
L^{-1} |A-B|_1 \le X^t(A,B) \le L|A-B|_1 ,
\eeq
where $|e|_1:=|e_1|+\dots+|e_d|$ is the $L^1$ norm, so the deterministic limit \eqref{5.4} is from $[\frac 1L|e|_1,L|e|_1]$.  Let us denote by $B_r^1(0)$ the ball in $\bbR^d$ with respect to the $L^1$ norm, with radius $r$ and centered at the origin.  Then as in Example \ref{E.4.3}, we can show that there is convex open $S\subseteq\bbR^d$, containing $B_{1/L}^1(0)$ and contained in $B_L^1(0)$, such that if $S_t(\omega)$ is the set of all $A\in\bbZ^d$ with $X^0(0,A)\le t$ (for $t\ge 0$ and $\xi_v^s$ from \eqref{5.7}), then for almost every $\omega\in\Omega$ we have that for any $\delta>0$ and all large-enough $t\ge 0$ (depending on $\omega$ and $\delta$),
\beq\lb{5.9}
(1-\delta)tS \cap\bbZ^d\subseteq S_t(\omega) \subseteq (1+\delta)tS\cap\bbZ^d.
\eeq
That is, $S$ is again the deterministic asymptotic  shape of all points reachable from the origin in time $t$ (as $t\to\infty$ and after scaling by $t$).
\end{example}

\begin{example} \lb{E.5.2}
Consider a Poisson point process with parameter $\lambda>0$ on $\bbR$, defined on some  probability space $(\Omega',\calF',\bbP')$,  and let $N_t$ be the corresponding counting process (i.e., $N_t$ is the number of points in the interval $(0,t]$).  We now let $\Omega:=\Omega'\times \Omega_0^{\bbN_0}$ have the product probability measure, and for $\omega=(\omega',\omega_0,\omega_1,\dots)\in\Omega$ we let 
\[
\xi_v^t(\omega):=\eta(v,\omega_{N_t}).
\]
That is, now the interval after which the speeds $\xi_v^t$ change has an exponential distribution.
The speeds are again space-time stationary, and ($5^*$)  holds with $\phi(s):=e^{-\lambda s}$ when $\calF_t^\pm$ are defined via \eqref{5.5} and \eqref{5.6}. 
Indeed, if $G_{t,s}:= \{ N_{t+s} = N_{t }\}$ for $t,s\ge 0$, then
$ \bbP[G_{t,s}] = e^{-\lambda s}$ and events $E$ and $F\cap G_{t,s}^c$ are independent whenever $E\in\calF_t^-$ and $F\in\calF_{t+s}^+$ 
(see below).  
This includes $F=\Omega$, which yields for general $E\in\calF_t^-$ and $F\in\calF_{t+s}^+$,
\[
0\le \bbP[F\cap G_{t,s}\cap E] \le \bbP[G_{t,s}\cap E]=\bbP[G_{t,s}] \bbP[E].
\]
Therefore $\left|\bbP[F\cap G_{t,s}|E]-P[F\cap G_{t,s}]\right| \le   \bbP[G_{t,s}] $ and so
\[
\left|\bbP[F|E]-P[F]\right|\leq \left|\bbP[F\cap G_{t,s}^c|E]-P[F\cap G_{t,s}^c]\right|+\bbP[G_{t,s}] = e^{-\lambda s}.
\]
The above discussion therefore shows that Theorem \ref{T.1.2} applies to $X_{m,n}^{t,e}$ above for any $e\in\bbZ^d$, so $\frac 1n X_{0,n}^{0,e}$ converges to some $\omega$-independent constant almost surely.  And just as before, we can again also conclude \eqref{5.8} and \eqref{5.9}.

It remains to prove independence of $E$ and $F\cap G_{t,s}^c$ for any $E\in\calF_t^-$ and $F\in\calF_{t+s}^+$.
Let us denote $v_0,v_1,\dots$ all the edges in $V_d$ and for $m,J\in\bbN_0$ let  $Y_{m}^J(\omega):= (\eta(v_0,\omega_{m}),\dots,\eta(v_J,\omega_{m}) )$.
By Dynkin's $\pi$-$\lambda$ Theorem, it suffices to show that $\bbP[E\cap F\cap G_{t,s}^c]=\bbP[E]\bbP[ F\cap G_{t,s}^c]$ for
\[
E= \left\{ Y_{N_{t_i}}^J \in A_{i}\text{ for }i=1,\dots, n \right\} \qquad\text{and}\qquad F= \left\{Y_{N_{t_{i}}}^J \in A_{i} \text{ for } i=n+1, \dots, 2n \right\},
\]
with arbitrary $J\in\bbN_0$, Borel sets  $A_{1},\dots,A_{2n} \subseteq \bbR^J$, and times 
\[
0\le t_1<\dots<t_n=t<t+s=t_{n+1}<\dots<t_{2n}.
\]
Note that $N_{t_{i}}\ge N_{t_{i-1}}$ for all $i$ (let $t_0:=0$, so $N_{t_0}\equiv0$), and for any $k_1,\dots,k_{2n}\in\bbN_0$ we have
\[
\bbP\left[ N_{t_{i}}-N_{t_{i-1}}=k_i \text{ for } i=1,\dots,2n \right]= \prod_{i=1}^{2n} \frac{ (\lambda (t_i-t_{i-1}))^{k_i}}{k_i!}e^{-\lambda(t_i-t_{i-1})} =: \prod_{i=1}^{2n} p_{i,k_i}
\]
(clearly $\sum_{k\in\bbN_0} p_{i,k}=1$).  Since $G_{t,s}^c=\{N_{t_{n+1}}>N_{t_n}\}$,  with $K_2:=(k_{n+1},\dots,k_{2n})$ 
we obtain
\begin{align*}
\bbP \left[ F\cap G_{t,s}^c\right]&=
\sum_{ K_2 \in \bbN\times\bbN_0^{n-1}}
\bbP \left[ Y_{N_{t_n} +\sum_{j=n+1}^i k_i}^J \in A_i \,\&\, N_{t_{i}}-N_{t_{i-1}}=k_i  \text{ for } i=n+1,\dots,2n \right]\\
&= \sum_{ K_2 \in \bbN\times\bbN_0^{n-1}} \left( \prod_{i=n+1}^{2n} p_{i,k_i} \right)
\bbP \left[ Y_{\sum_{j=n+1}^i k_i}^J \in A_i  \text{ for } i=n+1,\dots,2n \right]
\end{align*}
because the $\sigma$-algebras $\calF'\times\{\emptyset,\Omega_0^{\bbN_0}\}$ and  $\{\emptyset,\Omega'\} \times \calF_0^{\bbN_0}$ are independent, random variables $\{N_{t_i}-N_{t_{i-1}}\}_{i=1,\dots,2n}$ are jointly independent,  and the joint distribution of $\{Y_{m}^J, Y_{m+1}^J, \dots \}$ is independent of $m$.  
But then with $K_1:=(k_{1},\dots,k_{n})$ we similarly  obtain the desired claim
\begin{align*}
\bbP \left[ E\cap F\cap G_{t,s}^c\right]&=
\sum_{ (K_2,K_1) \in \bbN\times\bbN_0^{2n-1}}
\bbP \left[ Y_{\sum_{j=1}^i k_i}^J \in A_i \,\&\, N_{t_{i}}-N_{t_{i-1}}=k_i  \text{ for } i=1,\dots,2n \right]\\
&= \sum_{ (K_2,K_1) \in \bbN\times\bbN_0^{2n-1}} \left( \prod_{i=1}^{2n} p_{i,k_i} \right)
\bbP \left[ Y_{\sum_{j=1}^i k_i}^J \in A_i  \text{ for } i=1,\dots,2n \right] \\
&= \sum_{ K_1 \in \bbN_0^{n}} \left( \prod_{i=1}^{n} p_{i,k_i} \right)
\bbP \left[ Y_{\sum_{j=1}^i k_i}^J \in A_i  \text{ for } i=1,\dots,n \right] \\
&  \qquad\qquad \sum_{ K_2 \in \bbN\times\bbN_0^{n-1}} \left( \prod_{i=n+1}^{2n} p_{i,k_i} \right)
\bbP \left[ Y_{\sum_{j=1}^i k_i}^J \in A_i  \text{ for } i=1,\dots,2n \right] \\
&= \sum_{ K_1 \in \bbN_0^{n}} \left( \prod_{i=1}^{n} p_{i,k_i} \right)
\bbP \left[ Y_{\sum_{j=1}^i k_i}^J \in A_i  \text{ for } i=1,\dots,n \right] \\
&  \qquad\qquad \sum_{ K_2 \in \bbN\times\bbN_0^{n-1}} \left( \prod_{i=n+1}^{2n} p_{i,k_i} \right)
\bbP \left[ Y_{\sum_{j=n+1}^i k_i}^J \in A_i  \text{ for } i=n+1,\dots,2n \right] \\
& =\bbP[E] \, \bbP \left[F\cap G_{t,s}^c\right],
\end{align*}
where  we also used $k_{n+1}\ge 1$ in the third equality.

\end{example}




\end{document}